\documentclass[11pt]{amsart}
\usepackage[margin=1in]{geometry}
\usepackage{amssymb}
\usepackage{microtype}
\usepackage[colorlinks=true,linkcolor=blue,citecolor=blue,urlcolor=blue]{hyperref}

\newtheorem{theorem}{Theorem}[section]
\newtheorem{lemma}[theorem]{Lemma}
\newtheorem{proposition}[theorem]{Proposition}

\newtheorem{question}[theorem]{Question}
\theoremstyle{definition}

\theoremstyle{remark}
\newtheorem{remark}[theorem]{Remark}

\newcommand{\Z}{\mathbb Z}
\newcommand{\R}{\mathbb R}
\newcommand{\N}{\mathbb N}
\newcommand{\calD}{\mathcal D}
\newcommand{\calR}{\mathcal R}
\newcommand{\eps}{\varepsilon}
\newcommand{\tY}{\widetilde Y}
\newcommand{\hY}{\widehat Y}
\DeclareMathOperator{\Free}{Free}
\DeclareMathOperator{\supp}{supp}
\DeclareMathOperator{\Lip}{Lip}
\DeclareMathOperator{\dist}{dist}

\DeclareMathOperator{\relint}{relint}

\title[Continuous homomorphisms of higher dimension]
{Continuous graph homomorphisms of higher dimensional abelian group actions}
\author{Ruijun Wang}
\date{}
\subjclass[2020]{Primary 03E15}
\keywords{Continuous graph homomorphism, Schreier graph, Twelve Tiles Theorem, computable enumerability, Baumslag--Solitar group}

\begin{document}
\begin{abstract}
For every fixed integer $d\geq2$, we prove that the finite graphs receiving
a continuous homomorphism from the standard Schreier graph $F(2^{\mathbb Z^d})$ form a $\Sigma^0_1$-complete set. This extends a theorem of Gao, Jackson, Krohne and Seward when
$d=2$. For the positive part of the reduction, we show that if a graph $H$ satisfies a certain property then there is a continuous graph homomorphism from $F(2^{\mathbb Z^d})$ to $H$, in particular, the complete graph $K_4$ satisfies this property. This extends a theorem of Gao and Jackson. We also prove that $\big (\mathcal H_d\big )$ is a strictly decreasing sequence.
\end{abstract}
\maketitle

\section{Introduction}
In the seminal paper \cite{KST}, Kechris, Solecki and Todorcevic initiated the study of descriptive combinatorics. In \cite{GJ}, Gao and Jackson developed the rectangular partition for Schreier graphs of countable abelian groups actions, in \cite[Theorem 4.2]{GJ}, the authors showed the standard Schreier graphs $F(2^{\mathbb Z^d}),\ d>1$ have a continuous proper $4$-coloring. In \cite{GJKS}, Gao, Jackson, Krohne and Seward studied the continuous combinatorics of abelian group actions, in \cite[Theorem 1.5.1]{GJKS} and \cite[Theorem 3.1.1]{GJKS}, the authors showed two different proofs that $F(2^{\mathbb Z^2})$ does not admit a continuous proper $3$-coloring. As a corollary, the continuous chromatic number $\chi_c(F(2^{\mathbb Z^d}))=4,\ d>1$.

A proper coloring is a graph homomorphism to the complete graphs, so it gives rise to the graph homomorphism problems. In \cite[Section 4]{GJKS}, the authors asked the following question.

\begin{question}[The continuous (or Borel) graph homomorphism problem, Gao, Jackson, Krohne and Seward]
For which finite graphs $H$ does there exist a continuous (or Borel) graph homomorphism from $F(2^{\mathbb Z^d})$ to $H$? What is the complexity of this set? In particular, is it computable?
\end{question}

In \cite{CU}, Chandgotia and Unger proved that there is a Borel graph homomorphism from the Schreier graph $F(2^{\mathbb Z^d}),\ d>1$ to any finite graph with an odd-cycle. It is well-known that the graph does not admit a Borel proper $2$-coloring. So a finite graph receives a Borel graph homomorphism from $F(2^{\mathbb Z^d}),\ d>1$ if and only if it is not bipartite. In particular, the set is computable.

In \cite[Section 4.2]{GJKS}, the authors proved that the continuous subshift problem for $F(2^{\mathbb Z})$ is computable. As a consequence, the one-dimensional continuous graph homomorphism problem is computable. In \cite[Section 4.3]{GJKS}, the authors proved that the continuous subshift problem for $F(2^{\mathbb Z^2})$ is $\Sigma^0_1$-complete, and claimed without a proof that the continuous subshift problem for $F(2^{\mathbb Z^d}),\ d>1$ is $\Sigma^0_1$-complete. In \cite[Theorem 4.4.1]{GJKS}, the authors proved that the set of finite graphs receiving a continuous graph homomorphism from $F(2^{\mathbb Z^2})$ is $\Sigma^0_1$-complete, in particular, it is not computable.

In this paper, we show the following theorem.

\begin{theorem}\label{intro:main}
For every fixed integer $d\geq2$, the set of finite graphs receiving a continuous graph homomorphism from $F(2^{\mathbb Z^d})$ is
$\Sigma^0_1$-complete.
\end{theorem}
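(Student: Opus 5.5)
The proof has two halves: membership in $\Sigma^0_1$, and a reduction of a known $\Sigma^0_1$-complete problem to our set. For the GJKS case $d=2$ the hard problem is the tiling problem restricted by the Twelve Tiles Theorem; I will lift both halves to general $d$.

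\textbf{Upper bound.} I will use the twelve-tiles-style characterization for $F(2^{\mathbb Z^d})$. A continuous homomorphism $F(2^{\mathbb Z^d})\to H$ exists if and only if there exist $n$ and a finite ``tile structure'' realizable in $H$. The tile structure consists of homomorphisms from a finite family of $d$-dimensional rectangular tiles into $H$, with side lengths in $\{n,n+1\}$, that agree on shared boundaries whenever such tiles are glued face to face. The forward direction comes from compactness. A continuous map is locally determined, and it can be evaluated on points of a suitably chosen hyperaperiodic element with a toroidal/periodic structure of coprime periods. The converse comes from the Gao--Jackson rectangular (marker) tilings of $F(2^{\mathbb Z^d})$ into clopen-definable rectangles with sides in $\{n,n+1\}$. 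Given $H$, one enumerates $n$ and finite colorings and checks a decidable finite condition. This places the set in $\Sigma^0_1$.

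\textbf{Lower bound.} I reduce from the $\Sigma^0_1$-complete set used for $d=2$, namely the finite graphs $H$ with a continuous homomorphism from $F(2^{\mathbb Z^2})$. The tool is a computable map $H\mapsto H_d$ such that $F(2^{\mathbb Z^2})\to H$ continuously if and only if $F(2^{\mathbb Z^d})\to H_d$ continuously. Alternatively I could reduce directly from the halting problem through the GJKS subshift encoding extended to $d$ dimensions. The natural gadget is a product-type construction. Choose $H_d$ so that a homomorphism on $\mathbb Z^d$ restricts, on the extra $d-2$ directions, to something forced to be ``constant up to a $K_4$-type factor''. For this I would use the paper's positive result that graphs with the distinguished property, e.g.\ $K_4$, receive continuous homomorphisms. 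Then the extra coordinates can always be colored continuously, while the first two coordinates must carry a homomorphism to $H$.

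For the positive direction, take a continuous homomorphism $F(2^{\mathbb Z^2})\to H$ and a continuous homomorphism $F(2^{\mathbb Z^{d}})\to K_4$. Combine them using the continuous projection of $\mathbb Z^d$-orbits to $\mathbb Z^2$-sub-orbits. Here $2^{\mathbb Z^d}$ restricted to $\mathbb Z^2$ is still free on a clopen-structured set, via hyperaperiodic points. This gives a homomorphism to $H\times K_4$, or to a suitable tensor/strong product. For the negative direction, a homomorphism $F(2^{\mathbb Z^d})\to H_d$ restricts to the $\mathbb Z^2$-slices. I then use the twelve-tiles characterization on both sides: a $d$-dimensional tile witness projects to a $2$-dimensional one after fixing the extra coordinates. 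The product structure must be chosen so that projection to $H$ yields a valid homomorphism on the $2$-dimensional tiles.

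\textbf{Main obstacle.} I expect the main difficulty to be designing $H_d$ so that edges in the extra directions cannot be exploited to ``cheat''. Moving in directions $3,\dots,d$ must not let the $H$-coordinate escape being a homomorphism in directions $1,2$. At the same time, the extra directions must always admit a continuous coloring. My first attempt is to let an edge of $H_d$ in any direction mean: the $H$-coordinate is adjacent and the $K_4$-coordinate is either equal or adjacent. This is a tensor product with loops added on the $K_4$ factor. Then the negative direction follows by simply forgetting the $K_4$ coordinate on a $2$-dimensional slice. The positive direction needs the product to still be colorable in every direction, and I would verify this via the $d$-dimensional tile criterion from the upper bound.
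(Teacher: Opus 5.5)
Your \emph{lower bound} fails. In your gadget, $(h,k)\sim(h',k')$ iff $h\sim_H h'$ and $k,k'$ are equal or adjacent in $K_4$. This graph is homomorphically equivalent to $H$: the projection $H_d\to H$ is a homomorphism, and so is $h\mapsto(h,k_0)$, thanks to the loops on the $K_4$ factor. Hence $H_d\in\mathcal H_d$ iff $H\in\mathcal H_d$, and your positive direction amounts to $\mathcal H_2\subseteq\mathcal H_d$. That is false for $d\geq3$: the paper's last section gives $M_{2,L}\in\mathcal H_2\setminus\mathcal H_3$.

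Concretely, coloring $\Z^2$-suborbits into $H$ by universality says nothing about the $e_3,\dots,e_d$-edges, yet your edge relation demands $H$-adjacency along them too. The underlying problem is that the target carries no direction labels, so a product gadget cannot confine the $H$-constraint to $e_1,e_2$. If some edges let the $H$-coordinate stay fixed, a homomorphism may use them in directions $e_1,e_2$ as well. For example, the natural repair $H\square K_4$ receives $K_4$, so it lies in $\mathcal H_d$ for every $H$. If no edges do, projection to $H$ is again a homomorphism.

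The missing idea is not to reduce from all of $\mathcal H_2$. The paper reuses the GJKS sequence $(H_n)$. Its negative instances lie outside $\mathcal H_2\supseteq\mathcal H_d$. Its positive instances satisfy a dimension-free condition: an odd closed walk $\zeta_n$ with $[\zeta_n]^2=1$ in $\Pi_1^*(H_n)$. The real work is Theorem~\ref{intro:positive}, which shows this condition yields continuous homomorphisms in every dimension. It goes through the complex of $\langle a,t\mid tat^{-1}=a^3\rangle$, logarithmic carriers, the anisotropic extension, the approximate parity map into $\hY$, and the equivariant map $\eta:\hY\to\calD(H)$. Your aside about extending the GJKS encoding leaves exactly this step open.

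Your \emph{upper bound} also rests on a result that is not available. The GJKS Twelve Tiles Theorem is planar, and its tiles are not boxes with sides in $\{n,n+1\}$. Their side lengths are governed by two coprime parameters $p,q$ (the width-one tiles in the last section have sides $p$, $q$, $p+q$, $pq$), and they are glued along shared boundary cycles. Your $d$-dimensional criterion is not correct as stated:
\begin{itemize}
\item clopen box tilings do not meet face to face, so agreement under face-to-face gluing does not produce a homomorphism;
\item requiring compatibility at every relative offset is not necessary. It already fails for $K_4\in\mathcal H_d$: opposite faces would need disjoint pairs of colors, and a parity count along sides of lengths $n$ and $n+1$ gives a contradiction.
\end{itemize}
The paper instead gets computable enumerability from Bernshteyn's theorem. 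That theorem provides a computably enumerable family $\mathcal W$ of finite graphs with $H\in\mathcal H_d$ iff some member of $\mathcal W$ maps to $H$. Combined with the decidability of homomorphism existence between finite graphs, this gives the bound.
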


As a corollary, the continuous subshift problem for $F(2^{\mathbb Z^d}),\ d>1$ is $\Sigma^0_1$-complete. The reduction uses the same reduction in the proof of the following theorem, see \cite[Section 3.3, 3.4, 3.5 and 4.4]{GJKS}.

\begin{theorem}[Gao, Jackson, Krohne and Seward]\label{intro:GJKS}
There is a $\Sigma^0_1$-complete set $A\subseteq\N$ and a computable
sequence of finite connected simple graphs $(H_n)_{n\in\N}$ such that:
\begin{enumerate}
\item if $n\in A$, then $H_n$ has an odd closed walk $\zeta_n$ with
$[\zeta_n]^2=1$ in $\Pi_1^*(H_n)$;
\item if $n\notin A$, then $F(2^{\mathbb Z^2})$ has no continuous homomorphism
to $H_n$.
\end{enumerate}
\end{theorem}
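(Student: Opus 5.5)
The plan is to run a computable many-one reduction from the halting problem, taking $A=\{n : \text{the $n$-th Turing machine } M_n \text{ halts on empty input}\}$, which is $\Sigma^0_1$-complete. The first step is to turn each $M_n$ uniformly into a finite connected simple graph $H_n$. Graph homomorphisms from large rectangular grids of $\Z^2$ into $H_n$ should be forced to look like space-time diagrams of $M_n$. The encoding will go through Wang-tile-like gadgets: vertices of $H_n$ encode local windows of a computation (tape symbol, head/state information, and a coordinate parity), and edges encode the horizontal and vertical adjacency constraints. Since homomorphisms only see edges and not directions, I would subdivide and decorate the edges with rigid gadgets, for example distinct odd-length ``flags'', so that a homomorphism from the grid must respect an orientation and a horizontal/vertical distinction up to a bounded ambiguity. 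In addition, $H_n$ will contain a distinguished odd cycle $C$ attached along the ``halting'' states.

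Part (1) is the positive direction. Suppose $M_n$ halts in $t$ steps. Then the finite halting computation yields an explicit closed walk $\zeta_n$ of odd length through the halting gadget. Using the $2$-cell structure underlying $\Pi_1^*(H_n)$, namely the $4$-cycles coming from commuting tile squares, I would show by a direct combinatorial homotopy that $\zeta_n^2$ bounds. The idea is to tile the square of the walk by the finite computation diagram, read off $4$-cycle relations row by row, and use the halting state to close the diagram up. This step is a routine but careful bookkeeping check.

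Part (2) is the negative direction and the main obstacle. I would invoke the Twelve Tiles characterization from \cite{GJKS}: a continuous homomorphism $F(2^{\Z^2})\to H_n$ exists iff there are $n_0$ and coprime $p,q$ with a homomorphism of the associated twelve-tile configuration, that is, rectangles of side lengths drawn from $\{n_0,n_0+p,n_0+q\}$ with compatible boundaries. Given such tiles, I would argue that the forced computation structure propagates across the tiles. The tiles with coprime periods produce a closed boundary walk that is simultaneously consistent with two incommensurable periodicities. A winding/parity argument in $\Pi_1^*(H_n)$ then forces some odd closed walk whose square is trivial, and by rigidity of the gadgets this can only arise from the halting cycle $C$. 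Equivalently, a genuine halting computation must appear inside a tile, contradicting $n\notin A$.

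The delicate points are two. First, the gadgets must be rigid enough to exclude ``cheating'' homomorphisms that fold the grid. Second, one must verify that without halting, every odd closed walk has nontrivial square in $\Pi_1^*(H_n)$, via an invariant such as a homomorphism from $\Pi_1^*(H_n)$ to a torsion-free group that counts signed passages through computation gadgets. Computability of $n\mapsto H_n$ is immediate from the construction.
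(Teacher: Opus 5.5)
The paper does not reprove this statement; it quotes it from GJKS (Sections 3.3--3.5 and 4.4). Judged on its own, your sketch has a genuine gap in the hard direction (2).

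Your argument there has two steps. First, a homomorphism from a twelve-tile graph into $H_n$ forces, by a winding/parity argument, an odd closed walk with trivial square. Second, without halting no such walk exists. The first step is false in general. The second, even if proved, would not give (2), because the condition $[\zeta]^2=1$ is sufficient for $F(2^{\Z^2})\to_c H$ (Theorem~\ref{intro:positive}) but not necessary.

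In $\Pi_1^*$ the twelve tiles only give commuting boundary classes with relations such as $\gamma^q=\delta^p$. Hence $\gamma=\epsilon^p$ and $\delta=\epsilon^q$ for a single class $\epsilon$, which is odd for $B_{p,q}$ with $p$ odd. Nothing forces $\epsilon$ to have finite order.

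The paper itself contains counterexamples. The marker graphs $M_{2,L}$ of Lemma~\ref{lem:strict-marker} lie in $\mathcal H_2$. Summing their edge directions $\omega$ along walks kills spurs and four-walks, so it defines a homomorphism $\Pi_1^*(M_{2,L})\to\Z^2$. This homomorphism sends every odd class to a vector with odd coordinate sum, so no odd class of $M_{2,L}$ even has finite order. The graph of Remark~\ref{rem:converse} is another example.

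$M_{2,L}$ is exactly a window-and-shift encoding graph of the kind you propose. So twelve-tile images in the direction-rigid part of your $H_n$ produce odd classes of infinite order, not involutions. Your closing sentence, that a genuine halting computation must appear inside a tile, is the right target. But it is not equivalent to the $\Pi_1^*$ claim, and you give no argument for it.

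Within your encoding approach, (2) has to be proved combinatorially. Rigidity of $H_n$ must turn any homomorphism from a twelve-tile graph into twelve genuine, compatibly glued patterns of the encoded tiling system. Compare Lemma~\ref{lem:strict-product}, where four-walk relations and coprime odd periods force constant edge directions up to a global signed change of axes. The tiling system must then be designed so that such patterns necessarily contain a complete halting computation; in particular, blank or periodic fillings must be excluded.

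Decorating $H_n$ with flags cannot provide this rigidity, since enlarging the target only adds homomorphisms. Rigidity has to come from the absence of alternatives in the local four-cycle structure of $H_n$, as in properties (1) and (2) of Section~\ref{subsec:strict-dimensions}.

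Moreover, the odd cycle $C$ belongs to $H_n$ for every $n$, because $H_n$ is computed from the transition table without knowing whether $M_n$ halts. You must show that twelve-tile homomorphisms cannot exploit it. This runs into a built-in tension: if all of $H_n$ carried such a direction assignment, the $\Z^2$-valued invariant above would rule out conclusion (1). So $C$ must break rigidity, and you must prove that this breach is usable only through a finite halting computation diagram.

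None of these mechanisms is specified, so neither direction is established, and the one concrete argument offered for (2) cannot work.
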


We write
\[
 \mathcal H_d=\{H:F(2^{\mathbb Z^d})\longrightarrow_c H\}.
\]

It is easy to see that $\mathcal H_j\subseteq \mathcal H_i$ for $i<j$, see the proof of \cite[Corollary 1.5.2]{GJKS} for example. So we only need to strengthen the positive part of the reduction, that is the following theorem.

\begin{theorem}\label{intro:positive}
Let $H$ be a finite simple graph containing an odd closed walk $\gamma$
such that $[\gamma]^2=1$ in $\Pi_1^*(H)$.  For every $d\geq1$, every
free continuous $\Z^d$-action on a zero-dimensional Polish space admits
a continuous homomorphism from its standard Schreier graph to $H$.
\end{theorem}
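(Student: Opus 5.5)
We will follow the strategy of Gao--Jackson's rectangular tiling argument for $d=2$, generalized to arbitrary $d$ via clopen marker regions, and use the hypothesis on $\gamma$ to fill in the boundary values. The argument has three steps.

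\textbf{Step 1: tiling.} Fix a free continuous $\Z^d$-action on a zero-dimensional Polish space $X$. Using clopen marker sets, as in the rectangular partition of \cite{GJ} extended to higher dimensions, we obtain for any large $N$ a relatively clopen partition of each orbit into rectangular boxes. Each side length lies in $\{N, N+1\}$, with two admissible values chosen so that their parities differ. Membership in a box, and the position within it, are determined continuously by a finite window around the point. This is the higher-dimensional version of the Gao--Jackson construction: one builds markers, takes Voronoi-type regions, and then adjusts boundaries inductively over coordinate directions so that the regions become boxes.

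\textbf{Step 2: the key lemma.} Next we define the homomorphism box by box, using a ``skeleton'' coloring. The key combinatorial input is a lemma derived from $[\gamma]^2 = 1$ in $\Pi_1^*(H)$. Write $\gamma$ with odd length $\ell$. The lemma says that for all sufficiently large $m,n$ of either parity there is a homomorphism from the grid $[0,m]\times[0,n]$ to $H$ whose boundary follows a prescribed closed walk built from powers of $\gamma$ and a base vertex. Concretely, $[\gamma^2]=1$ gives a null-homotopy of the even walk $\gamma^2$ through square-tile moves. This lets us fill a $2$-dimensional grid whose boundary reads $\gamma^{2}$ padded by back-and-forth steps. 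Combined with the freedom to insert the odd walk $\gamma$ along an edge, this lets us match any parity of side length. This is exactly the $d=2$ positive argument of \cite{GJKS}, Section 4.4.

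\textbf{Step 3: higher dimensions.} To handle $d\ge 3$, we fill the skeleton of each box dimension by dimension. We first color all edges of the tiling lying on $1$-dimensional faces by walks of the correct length between a fixed base vertex $v_0$ at box corners, using a large even-length closed walk together with $\gamma$ to adjust parity. Then we fill $2$-faces using the lemma. Then we fill $k$-faces for $k\ge3$ by reducing to $2$-dimensional fillings. The natural way to do this is to use the coordinate projection trick: a homomorphism on a $k$-face may be taken of the form $x\mapsto f(x_i,x_j)$ after first retracting the face's boundary appropriately. Alternatively, we can arrange the face colorings so that they are ``product-like,'' meaning they depend only on two coordinates near the boundary. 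The cleanest way is to choose all $1$-face walks to be constant-length prefixes from the corner, so that colorings of parallel $2$-faces agree. Then the filling of a $k$-box is obtained by letting each point take the color determined by its distance profile to the corner in two chosen coordinates, and checking compatibility with all lower faces. Continuity follows because everything is determined by a bounded window.

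\textbf{Main obstacle.} The hard step is this compatibility in dimension $\ge3$. Faces of neighboring boxes are shared only partially, since the box boundaries do not align. Moreover, the coloring on a $2$-face must be extendable to every box containing it. We expect to handle this by first coloring the $(d-1)$-dimensional interfaces between boxes with a canonical pattern: a neighborhood of every box boundary gets color $v_0$ on a sublattice, with checkerboard alternation $v_0,v_1$ along an edge $v_0v_1$. The box interiors then become the only places where the parity mismatch must be absorbed. This reduces the problem to a single box with checkerboard boundary and side lengths of arbitrary parity. The mismatch can then be cancelled by inserting $\gamma$-twisted slabs along two coordinate directions, where the $\gamma^2$ null-homotopy resolves their intersection.
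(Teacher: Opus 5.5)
There is a genuine gap, and it sits exactly where you place the main obstacle. Step~3 carries the whole content of the theorem for $d\geq3$, and it lists possibilities rather than giving a construction. Once the $2$-faces are filled, extending inward across a $k$-box with $k\geq3$ means extending boundary data given on a $(k-1)$-sphere. That is a higher-homotopy question, and the hypothesis $[\gamma]^2=1$ is a statement about $\Pi_1^*(H)$ only, so it says nothing about it. A coloring of the form $x\mapsto f(x_i,x_j)$ is not even a homomorphism: it is constant along the other directions, and $H$ is loopless. Composing with a graph map such as $x\mapsto(x_1,x_2+\cdots+x_k)$ fixes this. But then the values forced on each face must agree with those forced by neighboring boxes, whose faces are shared only partially and with different corner offsets. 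Nothing in your plan arranges this agreement. Already at the corners it fails: corners of neighboring boxes can be at distance $1$, so they cannot all be colored $v_0$.

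The resolution you sketch at the end fails outright. In each orbit the union of box boundaries is connected and within bounded distance of every point. So a proper $v_0/v_1$ checkerboard on a neighborhood $U$ of it agrees with the lattice parity, up to a swap on each orbit. Now extend it to any $x$: take the first $g$, in a fixed enumeration of $\Z^d$, with $g\cdot x\in U$, and add the parity of $|g_1|+\cdots+|g_d|$. On $\Free(2^{\Z^d})$ this gives a continuous proper $2$-coloring, which does not exist. If instead each box carries its own checkerboard, neighbors of opposite relative parity put equal colors on adjacent sites across their interface. Either way, parity defects cannot be confined to box interiors.

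What is missing is a mechanism that makes every extension over cells of dimension at least three automatic, while using the hypothesis only through $\Pi_1^*(H)$. The paper never fills cells in $H$. Instead it builds a continuous finite-valued map $f:X\to\hY$ into the parity double cover of the presentation complex of $\langle a,t\mid tat^{-1}=a^3\rangle$, and this map is only approximately $\tau$-equivariant (Proposition~\ref{prop:parity}). All extensions over cells happen inside finite contractible carriers $C_I\subset\tY$ (Lemmas~\ref{lem:carriers} and~\ref{lem:extension}). The logarithmic distortion of $a^n$ keeps the error per generator of order $\log(L+2)/L+B_d(L)/M$. The hypothesis is used only to build an equivariant map $\eta:\hY\to\calD(H)$ on this $2$-complex. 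With $t$ sent to a constant, filling the relation cell requires $\alpha*\tau_H\alpha$ to be null-homotopic, which is what $[\gamma]^2=1$ provides. Finally, choosing a vertex of maximal mass turns approximate equivariance into an exact continuous homomorphism.
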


$\Pi_1^*(H)$ is called the \emph{reduced homotopy group}. It is defined in \cite[Section 3.3]{GJKS}. It is a quotient group of the fundamental group $\Pi_1(H)$ taking a closed walk of length 4 as the identity element. The length of a closed walk defines a group homomorphism from $\Pi_1^*(H)$ to $\mathbb Z/2\mathbb Z$. For example, the fundamental group $\Pi_1(K_3)$ is isomorphic to $\mathbb Z$, and every closed walk of length four in $K_3$ is already homotopically trivial, so $\Pi_1^*(K_3)$ is also isomorphic to $\mathbb Z$, hence there is no odd closed walk $\gamma$ with $[\gamma]^2=1$.

For another example, in the reduced homotopy group $\Pi_1^*(K_4)$, we define three closed walks of length 3.

$$\alpha=(x,y,z,x),\ \beta=(x,y,w,x),\ \gamma=(x,z,w,x).$$We compute

$$\alpha\gamma=(x,y,z,w,x)=1,\ \beta\gamma^{-1}=(x,y,w,z,x)=1,\ \alpha^{-1}\beta=(x,z,y,w,x)=1$$
where $1$ is the identity element.

So $\alpha=\beta=\gamma$ and $\alpha^2=1$. In fact, $\Pi_1^*(K_4)\cong \mathbb Z/2\mathbb Z$. The graph $K_4$ satisfies the above assumption, so $F(2^{\mathbb Z^d}),\ d>1$ admits a continuous proper $4$-coloring. This theorem extends \cite[Theorem 4.2]{GJ}.

Above all, this proves $\Sigma^0_1$-hardness of the main
Theorem~\ref{intro:main}. In \cite[Theorem 1.14]{Bernshteyn},
Bernshteyn proved that, for any countably infinite marked group
$(\Gamma,S)$, there is
an explicit countable family $\mathcal W$ of finite graphs such that
a finite graph $H$ receives a continuous homomorphism from the
Schreier graph of $\Free(2^\Gamma)$ if and only if some graph in
$\mathcal W$ admits a graph homomorphism to $H$. For
$\Gamma=\mathbb Z^d$ with its standard generators, the construction
in \cite[Section 5.B]{Bernshteyn} gives a computable enumeration of
$\mathcal W$. Since homomorphism existence between two finite graphs
is decidable, this shows that $\mathcal H_d$ is computably enumerable.
This finishes the proof of the main theorem.

We remark that the sequence $\mathcal H_d\supseteq \mathcal H_{d+1}$ is strictly decreasing. We remark that the reverse of Theorem \ref{intro:positive} is not true in general. We will show these remarks in the last section. The rest of this paper is devoted to a proof of Theorem \ref{intro:positive}.

This paper is a collaboration with artificial intelligence. The AI model is gpt 6. The AI proves the main theorem under human suggestions and writes the main body of early drafts. The author writes the abstract and induction and improves readability. The author has reviewed all AI-generated content.

\section{Preliminaries}

\subsection{Actions and graph homomorphisms}
Let $G=\Z^d$, with standard generators $e_1,\ldots,e_d$.
All actions on the input space $X$ are continuous actions on
zero-dimensional Polish spaces. A space is zero-dimensional if it
has a base of clopen sets, and a set is clopen if it is both open
and closed. An action is free if $g\cdot x=x$ implies $g=0$.
Each orbit of a free action can therefore be identified with
$\Z^d$ by choosing an origin $x$ and sending $g$ to $g\cdot x$.

We use the shift action
\[
 (g\cdot x)(u)=x(u+g),\qquad x\in2^G,\quad g,u\in G,
\]
and write $X_d=\Free(2^G)$ for its free part. The standard
Schreier graph on a free $G$-space $X$ joins $x$ to
$e_i\cdot x$ for $1\leq i\leq d$. A continuous homomorphism to a
finite graph $H$ is a continuous map $c:X\to V(H)$ taking adjacent
points to adjacent vertices. The target vertex set has the discrete
topology, so continuity means that every color class is clopen.
The notation $F(2^{\Z^d})\longrightarrow_c H$ refers to a continuous
homomorphism from this standard graph on $X_d$.

We will also use continuous universality: if $H\in\mathcal H_d$,
then every free continuous $\Z^d$-action on a zero-dimensional
Polish space admits a continuous homomorphism to $H$
\cite[Theorem 1.12]{Bernshteyn}. The Cartesian product
$J_1\square\cdots\square J_k$ of graphs has vertex set
$V(J_1)\times\cdots\times V(J_k)$; two tuples are adjacent
when they agree in all but one coordinate, where their entries
are adjacent in that factor.

Every finite binary pattern on $\Z^d$ has a free extension.
Enumerate the nonzero displacements $g$, and at each stage choose
two still unassigned sites $u,u+g$ and give them different symbols.
Fill the remaining sites arbitrarily. The resulting configuration
has no nonzero period. Thus $X_d$ is dense in $2^{\Z^d}$. We also
recall that a homeomorphism $T$ is topologically transitive if, for
any two nonempty open sets $U,V$, some iterate $T^nU$ meets $V$.

We work with finite simple graphs, coded by their finite adjacency
matrices. Allowing loops does not change the complexity conclusion:
a graph with a loop admits a constant homomorphism, and the simple
graphs already provide the hardness reduction. A set of graph codes
is $\Sigma^0_1$ if it is computably enumerable. It is
$\Sigma^0_1$-complete if every computably enumerable subset of
$\N$ computably many-one reduces to it.

\subsection{Walks and the reduced homotopy group}
Here we give the graph-theoretic description of the reduced homotopy
group used in \cite[Section 3.3, pp.\ 83--84]{GJKS}.
A walk is a finite sequence
$\sigma=(v_0,\ldots,v_m)$ with consecutive vertices adjacent.
Repeated vertices and edges are allowed, and the length is $m$.
The length-zero walk $(v)$ is constant. Write $\sigma^{-1}$ for
the reversed walk and $\sigma\eta$ for concatenation when the end
of $\sigma$ is the start of $\eta$.

A spur is a segment $(u,v,u)$. Inserting or deleting spurs gives
an equivalence relation on walks with fixed endpoints. Fix a vertex
$b$ of a connected graph $H$. The classes of closed walks based at
$b$, with concatenation as multiplication, form the fundamental
group $\Pi_1(H,b)$. The constant walk is the identity, and reversal
gives inverses. This definition requires no topology. For example,
choosing a spanning tree gives a free generating set: each edge
outside the tree contributes the closed walk that reaches that edge
along the tree, crosses it, and returns along the tree.

To form the reduced homotopy group, declare every closed walk of
length four trivial as well. More precisely, let $N$ be the normal
subgroup of $\Pi_1(H,b)$ generated by all classes
$[\lambda\sigma\lambda^{-1}]$, where $\sigma$ is a closed
four-walk and $\lambda$ runs from $b$ to its starting vertex. Set
\[
 \Pi_1^*(H,b)=\Pi_1(H,b)/N.
\]
We write $[\gamma]$ for the reduced class of a closed walk, and
usually omit the base point. In group identities, names of walks
stand for their classes, so spur cancellations are implicit.
Changing the base point along a walk
conjugates the corresponding classes, so properties such as
$[\gamma]^2=1$ do not depend on this choice. For a disconnected
graph we use the component containing the walk in question.

There is also a useful local description. In addition to spur moves,
one may replace $(u,v,w)$ by $(u,v',w)$ whenever both are walks.
Their union is the four-walk $(u,v,w,v',u)$, so this replacement
preserves the reduced class; conversely these replacements and spur
moves impose all the four-walk relations. For instance a four-walk
$\sigma=(u,v,w,z,u)$ becomes $(u,z,u,z,u)$ after one replacement,
and then disappears by spur cancellations.

Spur moves change the length by two, and the additional relations
have even length. Hence
\[
 \ell:\Pi_1^*(H,b)\longrightarrow\Z/2\Z,
 \qquad \ell([\gamma])=\text{length}(\gamma)\pmod2
\]
is a well-defined homomorphism. In particular an odd closed walk
has a nonidentity reduced class. A graph homomorphism induces a
homomorphism of reduced groups, since it takes spurs to spurs and
four-walks to four-walks, and preserves length parity.

\subsection{Paths, homotopies, and complexes}
We now describe the topological language used later in the proof.
The geometric realization of a graph replaces each edge by an
interval and joins its endpoints to the corresponding vertices.
A topological path is a continuous map $P:[0,1]\to Z$ into a
space $Z$. Two paths with the same endpoints are homotopic relative
to their endpoints if there is a continuous map
$h:[0,1]^2\to Z$ with
\[
 h(u,0)=P(u),\quad h(u,1)=Q(u),\quad
 h(0,s)=P(0),\quad h(1,s)=P(1).
\]
A closed path is null-homotopic if it is homotopic, with its base
point fixed, to the constant path. Equivalently, it extends to a
continuous map of a disk. The topological fundamental group is the
group of based closed paths modulo these homotopies. For the
geometric realization of a graph it agrees with the walk-and-spur
definition above.

A cell complex is a space assembled from points, intervals, disks,
and higher-dimensional balls by attaching each ball's boundary to
the cells already present. These pieces are its cells; a subcomplex
is a union of cells containing the boundaries of all its cells.
The usual term for such a space is a CW complex. We only use
complexes obtained by the explicit finite polygon gluings below,
their covering spaces, and rectangular decompositions of $\R^d$.

For a graph $H$, attach a disk along every closed four-walk, with
each of the four boundary edges traversing the indicated graph
edge. Call the resulting two-dimensional complex $H^\square$.
Boundary vertices or edges may be identified, since the walk may
repeat them. Attaching these disks makes precisely the four-walk
classes trivial, and thus
\[
 \Pi_1(H^\square,b)\cong\Pi_1^*(H,b).
\]
In particular, $[\gamma]=1$ means that the loop represented by
$\gamma$ bounds a disk in $H^\square$. This interpretation explains
why the combinatorial four-walk relations suffice when we extend
maps across disks later in the proof.

A rectangular CW decomposition of $\R^d$ partitions it into the
relative interiors of axis-aligned closed rectangles, including
rectangles of lower dimension. The boundary of every closed cell
is a union of lower-dimensional cells. A geometric face of a
rectangle may therefore consist of several cells. The decomposition
is locally finite if every compact set meets only finitely many
cells; it is regular if each closed cell is homeomorphic to a closed
ball with its stated boundary. Local finiteness ensures that maps
defined continuously on cells and agreeing on their boundaries
give a continuous map on all of $\R^d$.

The local shape of a rectangle at a point of one of its faces is
described by coordinate orthants: a coordinate at a lower endpoint
must increase to enter its interior, one at an upper endpoint
must decrease, and the other coordinates are unrestricted. We
call these local collections of directions its axis-orthant germs.

The mapping cylinder of a map $f:E\to B$ is obtained from
$B$ and $E\times[0,1]$ by identifying $(z,0)$ with $f(z)$.
Moving down each cylinder segment gives a deformation to $B$
which leaves $B$ fixed. This elementary model describes the
strips between adjacent levels in Section~\ref{sec:carriers}.

\subsection{Presentation complexes and covering spaces}
A group presentation
\[
 \langle s_1,\ldots,s_m\mid r_1,\ldots,r_k\rangle
\]
has a presentation complex formed from
one vertex, an oriented loop for each generator, and a disk for
each relation. The boundary of that disk follows the generator
loops in the order specified by the relation word; inverse letters
are traversed backwards. Its fundamental group is the presented
group, because its based paths are words in the generators, with
free cancellations and the stated relations.

The presentation complex used here is
\[
 Y=Y\bigl(\langle a,t\mid tat^{-1}=a^3\rangle\bigr).
\]
It has one vertex, two oriented loops, and one disk attached along
$tat^{-1}a^{-3}$. We use the same letters $a,t$ for the generators
of its fundamental group.

A covering map $p:\widetilde Z\to Z$ is locally a disjoint union
of copies of the same open subset of $Z$. A path in $Z$ has a unique
lift once its starting point over $Z$ has been chosen. A deck
transformation is a homeomorphism of $\widetilde Z$ commuting with
$p$. A universal cover is a connected covering space in which
every loop is null-homotopic.

For a presentation complex there is a concrete model of its
universal cover. Vertices are group elements $g$; the oriented
$s$-edge goes from $g$ to $gs$; and each relation disk has a lift
starting at each vertex and following the same relation word.
Deck transformations act by left multiplication on these vertices,
edges and disks. In particular we may name the vertices of $\tY$
by words in $a,t$, and write $a^sC$ for the translate of a
subcomplex $C\subseteq\tY$.

The homomorphism
\[
 \chi:\Pi_1(Y)\longrightarrow\Z/2\Z,
 \qquad \chi(a)=1,\quad \chi(t)=0,
\]
is well-defined, since the two sides of $tat^{-1}=a^3$ have the
same parity. Quotienting $\tY$ by the deck transformations in
$\ker\chi$ gives the double cover $\hY\to Y$. It has two
vertices, indexed by parity: every $a$-edge changes the sheet and
every $t$-edge stays in its sheet. There are two lifted relation
disks. Left multiplication by $a$ induces the deck involution
$\tau$ exchanging the two sheets. This cover is finite and hence
compact. A graph's bipartite double cover is the analogous graph
with vertices $v^+,v^-$ and an edge $v^+w^-$ whenever $vw$ is an
edge; its involution exchanges the signs.

An involution on a space is a map $\tau$ with $\tau^2$ equal to the
identity; it is free if it has no fixed point. A map between spaces
with involutions is equivariant if it commutes with the
involutions. These are the only equivariance requirements in the
map $\eta$ constructed in Section~\ref{sec:positive}.

\subsection{Metrics and controlled contractions}
A simplex is the convex hull of affinely independent points: an
interval, triangle, tetrahedron, and so on. A triangulation
subdivides a space into simplices whose intersections are common
faces. The finite complexes in the proof admit finite triangulations. A piecewise linear (PL) map is affine on every
simplex after a finite subdivision. An edge path follows finitely
many edges of a triangulation. A triangle move replaces one side
of a triangle by its other two sides, or reverses that operation;
it gives a homotopy inside the triangle with endpoints fixed. A
backtrack move inserts or deletes a path along an edge and back.
Give each simplex a Euclidean
metric agreeing on common faces; lengths of paths then define the
piecewise Euclidean path metric. We fix such a metric on $Y$ and
lift it to its covers, so deck transformations are isometries and
covering projections do not increase distances.

For a connected finite subcomplex $C$, its intrinsic path distance
$d_C(x,y)$ is the infimum of lengths of paths from $x$ to $y$
lying entirely in $C$. This can exceed the ambient distance, since
paths leaving $C$ might be shorter. If $C\subseteq C'$ then
$d_{C'}(x,y)\leq d_C(x,y)$ for $x,y\in C$; the inclusion is
therefore nonexpansive. On a rectangle or its boundary, the
rectilinear path distance is the infimum of the sums of coordinate
lengths of polygonal paths staying in that set. It is infinite
when no such path joins the two points. On a finite Euclidean
complex, a finite PL
map is Lipschitz for these path metrics: bound its affine constants
on its finitely many simplices, and add those bounds along a path.

A map $f$ is $L$-Lipschitz if
$d(f(x),f(y))\leq Ld(x,y)$ for all $x,y$; write $\Lip f$ for
its least such constant. For $F:\R^d\to(C,d_C)$, write $\Lip_iF\leq L_i'$ if
\[
 d_C(F(u+se_i),F(u+te_i))\leq L_i'|s-t|
 \quad(u\in\R^d,\ s,t\in\R).
\]
We use these separate coordinate bounds to choose different scales
in the first and the remaining coordinates. The uniform distance
between paths is
$d_\infty(P,Q)=\sup_{u\in[0,1]}d_C(P(u),Q(u))$.

A contraction of $C$ to $p$ is a continuous map
$H:C\times[0,1]\to C$ satisfying
$H(z,0)=z$ and $H(z,1)=p$. It fixes $p$ if $H(p,s)=p$ for every
$s$. Its time tracks have speed at most $D$ if
\[
 d_C(H(z,s),H(z,s'))\leq D|s-s'|
 \quad(z\in C,\ s,s'\in[0,1]).
\]
It is jointly $J$-Lipschitz if it is $J$-Lipschitz when
$C\times[0,1]$ has distance $d_C(z,z')+|s-s'|$.
The time-speed bound controls how far a point moves during the
contraction. The joint bound also controls how nearby starting
points move, and it may be much larger. Keeping these two bounds
separate is essential in Sections~\ref{sec:carriers} and
\ref{sec:extension}.

Finally, the probability simplex of a finite vertex set $V$ is
\[
 \Delta(V)=\{p:V\to[0,1]:\textstyle\sum_{v\in V}p(v)=1\}.
\]
Write $\delta_v$ for the mass concentrated at $v$ and
$\supp p=\{v:p(v)>0\}$. For $A\subseteq V$, its face
$\Delta(A)$ consists of the masses supported on $A$.
Two subsets of graph vertices are completely adjacent if every
vertex of one is adjacent to every vertex of the other. The space
$\calD(H)$ used in Section~\ref{sec:positive} is a union of
products $\Delta(A)\times\Delta(B)$ for such pairs. These are
convex Euclidean polytopes, and their intersections are faces, so
their union is a finite polyhedral complex. This concrete
description is all we need about that space. A fixed-shore fiber
means that one probability coordinate is held fixed; the allowed
values of the other coordinate form a simplex face, hence a
convex set.

\section{Clopen markers and rectangular orbit tiles}\label{sec:atlas}

The first step is to partition each orbit into rectangles of prescribed
coordinate scales. A clopen marker set supplies the centers; finite
clopen priorities make the choices continuous. We use rectangular CW
decompositions in the sense of Section~2, so a face of a tile may be
a union of smaller cells. This allows adjoining tiles to have different
subdivisions while their boundary maps still agree.

\begin{lemma}\label{lem:clopen}
Let a finite set of homeomorphisms generate a loopless graph on $X$, and
suppose the graph has maximum degree at most $\Delta$.  If $X$ has a
countable clopen cover by independent sets, then the graph has a clopen
maximal independent set and a continuous proper $(\Delta+1)$-coloring.
In particular this applies to any finite-displacement graph of a free
$\Z^d$-action.  For the graph generated by one homeomorphism, the absence
of fixed points suffices and gives a continuous proper three-coloring.
\end{lemma}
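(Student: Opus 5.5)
The plan is the standard greedy argument along a countable clopen cover. The point is that a greedy construction indexed by a countable clopen cover produces clopen sets at every stage. Let $U_0,U_1,\ldots$ be the given clopen cover by independent sets, and let $g_1,\ldots,g_k$ be the generating homeomorphisms. Write $N(A)=\bigcup_j (g_jA\cup g_j^{-1}A)$ for the neighborhood of a set $A$. If $A$ is clopen, so is $N(A)$, since each $g_j^{\pm1}$ is a homeomorphism and the union is finite.

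\emph{The maximal independent set.} Define $M_0=U_0$ and
\[
 M_{n+1}=M_n\cup\bigl(U_{n+1}\setminus N(M_n)\bigr).
\]
By induction each $M_n$ is clopen and independent. Independence holds because $U_{n+1}$ is independent and the new points avoid the neighbors of $M_n$. Put $M=\bigcup_n M_n$. This set is independent, since any edge lies in some finite stage, and it is open. For closedness, I will check that the complement is also open. A point $x\notin M$ lies in some $U_n$, and it was rejected at stage $n$, so $x\in N(M_{n-1})$. Since $N(M_{n-1})$ is open, every point of the open set $U_n\cap N(M_{n-1})$ lies outside $M$: it is adjacent to $M_{n-1}\subseteq M$ and $M$ is independent. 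Maximality is immediate, because every point is either added or rejected for having a neighbor in $M$. This local stabilization at a finite stage, which gives closedness, is the only real obstacle, and it is handled exactly as above.

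\emph{The coloring.} I would iterate the maximal independent set construction. Take $M^{(1)}$ as above. Apply the same construction to the induced graph on the clopen set $X\setminus(M^{(1)}\cup\cdots\cup M^{(i-1)})$, using the restricted cover $U_n\cap(\cdot)$, to get $M^{(i)}$, for $i\le\Delta+1$. The induced graph is generated by the same homeomorphisms restricted to the subspace, so the relative $N$ is still clopen. By maximality, a point left uncovered after $\Delta+1$ rounds has a neighbor in each of the $\Delta+1$ disjoint sets $M^{(i)}$. That is impossible with degree at most $\Delta$. So the $M^{(i)}$ are clopen color classes of a proper coloring.

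\emph{Free $\Z^d$-actions.} For a finite-displacement graph with displacements in a finite set $S\subseteq\Z^d\setminus\{0\}$, I need a countable clopen independent cover. For each $x$, freeness gives $g\cdot x\neq x$ for every $g\in S\cup(-S)$. Using zero-dimensionality and continuity, I choose a clopen neighborhood $V$ of $x$ with $gV\cap V=\emptyset$ for all such $g$; this set is independent. Since $X$ is Polish, it is second countable, so I can take $V$ from a fixed countable clopen base. This gives a countable subcover.

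\emph{One homeomorphism.} For a single fixed-point-free $T$, the same argument only guarantees clopen sets $V$ with $TV\cap V=\emptyset$, since period-two points are allowed, and degree at most $2$ gives three colors. The loopless hypothesis is exactly the absence of fixed points, which is all the construction of $V$ used.
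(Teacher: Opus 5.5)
Your proposal is correct and is essentially the argument the paper invokes: the paper cites the Kechris--Solecki--Todorcevic greedy proof (their Proposition 4.6, in the clopen form of Bernshteyn's Lemmas 2.1--2.3), which builds a maximal independent set stage by stage along a countable independent cover and then iterates it $\Delta+1$ times, exactly as you do. One phrase is imprecise: the induced graph on $X\setminus(M^{(1)}\cup\cdots\cup M^{(i-1)})$ is not generated by restricted homeomorphisms, since those restrictions are only partial maps. What you actually use, that the relative neighborhood $N(A)\cap\bigl(X\setminus(M^{(1)}\cup\cdots\cup M^{(i-1)})\bigr)$ is clopen, is nevertheless true.
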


The proof is essentially the same proof of \cite[Proposition 4.6]{KST}, see, for example, \cite[Lemma 2.1, 2.2 and 2.3]{Bernshteyn}.

Fix positive integers $L_1,\ldots,L_d$ and use the scaled norm
\[
 \|u\|_L=\max_{1\leq i\leq d}\frac{|u_i|}{L_i}.
\]
An orbit is identified with $\Z^d$ by a choice of origin, and its ambient
real-coordinate space with $\R^d$.

\begin{lemma}[Rectangular atlas]\label{lem:atlas}
For each $d$ there are constants $c>0$, $C<\infty$ and $L_0$ with the
following property.  For every free $\Z^d$-action and integers
$L_i\geq L_0$, one can continuously and translation-covariantly assign
to each pointed orbit a rectangular CW decomposition of $\R^d$ such that:
\begin{enumerate}
\item every vertex has integer coordinates;
\item every positive side in coordinate $i$, of every cell, has length
between $cL_i$ and $CL_i$;
\item the numbers of tiles meeting a given tile, and of cells in its
boundary, are bounded in terms of $d$ alone.
\end{enumerate}
Continuity here means that every bounded part of the pointed atlas is
determined by finitely many continuous finite-valued orbit data.
\end{lemma}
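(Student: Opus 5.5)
We construct the atlas in three stages. First we place clopen markers. Then we cut each orbit into Voronoi-type regions and replace these by rectangles, using the Gao--Jackson ``adjusting boundaries'' method. Finally we subdivide so that the result is a genuine rectangular CW decomposition.

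\emph{Markers.} Fix a large constant $K$ depending only on $d$. Consider the finite-displacement graph on $X$ joining $x$ to $g\cdot x$ whenever $0<\|g\|_L\leq K$. By Lemma~\ref{lem:clopen} this graph has a clopen maximal independent set $M$. Then any two distinct points of $M$ in an orbit are at $\|\cdot\|_L$-distance greater than $K$, and every point lies within $K$ of $M$. The continuity requirement in the statement is satisfied because membership in $M$ is clopen: any bounded window of the pointed orbit meets $M$ in a set determined by finitely many clopen conditions.

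\emph{Rectangles.} We cannot simply use Voronoi cells for the norm $\|\cdot\|_L$. They are polytopes, not rectangles, and their faces may be too small. We therefore follow the construction of \cite[Section 4]{GJ}, in particular the proof of their Theorem~4.2 giving the rectangular partition.
\begin{enumerate}
\item Around each marker $m$, take the box $B_m$ of half-side $K L_i/2$ in coordinate $i$.
\item Process coordinates one at a time. In coordinate $i$, whenever two overlapping or nearly touching boxes have $i$-faces within distance $cL_i$ of each other, align those faces.
\item Assign the resulting overlaps to one of the competing boxes by a priority given by a continuous proper coloring of the marker graph, again from Lemma~\ref{lem:clopen}. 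This coloring uses a bounded number of colors, so only finitely many adjustment rounds occur.
\end{enumerate}
Each adjustment moves a face by an amount bounded by a fixed fraction of $L_i$. Hence after finitely many rounds every region is a finite union of axis-aligned boxes whose coordinate-$i$ sides are comparable to $L_i$.

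\emph{CW decomposition.} Each region is a rectilinear polytope, so we refine it into rectangles as follows:
\begin{enumerate}
\item Cut each region by the hyperplanes through all its vertex coordinates. This creates only boundedly many pieces, since each region has bounded complexity.
\item Merge any slab thinner than $cL_i$ with a neighbour. Because of the alignment step, any two distinct cut coordinates differ either by $0$ or by at least $cL_i$, so such merges are only needed for degenerate cases.
\item Round all vertex coordinates to integers, which gives (1). This is harmless once $L_0\gg 1/c$.
\item Declare the lower-dimensional cells to be the common refinements of adjacent faces. This is allowed because a geometric face may consist of several cells.
\end{enumerate}
Property (2) follows from the alignment threshold together with the bound $KL_i$ on region size. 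Property (3) follows from a volume-packing count: tiles have volume comparable to $\prod L_i$, and any tile meeting a given one lies in a $\|\cdot\|_L$-ball of bounded radius, so boundedly many such tiles exist. Translation covariance holds automatically, because every step is defined from $M$ and the clopen colorings, which are equivariant.

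\emph{Main obstacle.} The hardest step is the boundary adjustment: we must guarantee simultaneously that no face side in any direction falls below $cL_i$ and that the adjustments terminate with bounded displacement. The plan is to choose the thresholds so that the alignment threshold in round $k$ is $\varepsilon^k$ times the marker separation. Each round's moves are then too small to destroy the alignments made in earlier rounds, exactly as in the iterated-threshold argument of \cite{GJ}.
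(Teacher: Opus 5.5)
There is a genuine gap, and it is in the step you yourself call the main obstacle. That step is exactly what property (2) needs. Your skeleton matches the paper's: clopen markers, boxes around them, a bounded priority coloring, priority resolution of overlaps, refinement into cells, and packing for (3). The problems are these.

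\emph{A slip in the boxes.} Distinct markers are more than $K$ apart in $\|\cdot\|_L$, so their difference exceeds $KL_i$ in some coordinate $i$. Hence boxes of half-side $KL_i/2$ are pairwise disjoint, and with covering radius $K$ they also leave gaps. So there is nothing for the priority step to resolve, and the regions do not cover $\R^d$. The half-sides must exceed the covering radius; the paper uses $1$-separated, $1$-covering markers with half-widths in $[2L_i,3L_i]$.

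\emph{The alignment rule is not a valid procedure.} ``Align any two $i$-faces of overlapping or nearly touching boxes within $cL_i$'' does not have bounded displacement, for three reasons. First, nearness is not transitive. A column of markers stacked in direction $e_2$, with first coordinates drifting by $0.9cL_1$ per step, gives an unbounded chain of pairwise-near faces that cannot all be aligned. Aligning pairwise in some order can instead move a face to within $cL_i$ of a face it was previously far from, so the process cascades. Second, overlapping or touching pairs are not the only relevant ones. Two boxes that do not meet each other but both meet a third box contribute cut levels to the third box's region; if those levels are close, your slicing creates a thin cell there. Third, the $\eps^k$ remark does not say what is aligned with what in round $k$, and it never yields the needed conclusion that all relevant distinct levels end up at least $cL_i$ apart.

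The missing idea is to make each choice one-sided, with respect to a priority order whose radius dominates the interaction range. The paper colors markers with at most $P_d$ priorities at radius $S=30$. The cut levels that can bound a common cell come from markers within $24<S$ of one another, so they have distinct priorities. Processing priorities in increasing order, the paper does not align at all. Each $i$-cut of a marker $a$ is an integer chosen from a window of length $L_i$, e.g.\ $[a_i-3L_i,a_i-2L_i]$. It is placed at distance at least $\delta_i=\lfloor L_i/(16P_d)\rfloor$ from the at most $2P_d$ earlier parallel cuts of markers within $S$. Pigeonhole makes this possible, since those cuts exclude at most $4P_d\delta_i+2P_d<L_i+1$ candidates. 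Equal priorities are more than $S$ apart, so simultaneous choices do not interfere. The paper then refines each box by the cut levels of boxes meeting it, keeps elementary boxes not inside a box of smaller priority, and takes $Q(x)=\bigcap_{R\ni x}F_R(x)$. This gives integer vertices and all sides in $[\delta_i,6L_i]$, with no merging or rounding.

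A snapping version of your alignment idea can also be made to work, but only inside this framework. You would process priority classes in order and snap a new face to an already-fixed face only if it lies within $\delta_k$. The thresholds must shrink by more than a factor of two from one class to the next, and the coloring radius must be large enough that all earlier faces relevant to a new face were compared with each other.
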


\begin{proof}
Apply Lemma~\ref{lem:clopen} to the finite displacement set
$0<\|g\|_L\leq1$.  A clopen maximal independent set produces, in each
orbit, a marker set $A\subset\Z^d$ whose distinct points have scaled
distance greater than one, while every lattice point is within scaled
distance one of a marker.

Fix $S=30$.  Packing disjoint scaled half-unit boxes gives a bound
$P_d$ for the number of markers within distance $S$ of any marker; for
example $P_d=(2S+3)^d$ suffices.  Continuously color the marker proximity
graph at radius $S$ with at most $P_d$ colors.  Regard these colors as
priorities.  For every marker $a$, choose its lower and upper $i$-cuts
from the integer intervals
\begin{equation}\label{eq:cuts}
 [a_i-3L_i,a_i-2L_i],\qquad [a_i+2L_i,a_i+3L_i].
\end{equation}
Process priorities in increasing order.  Choose each cut at distance at
least
\[
 \delta_i=\left\lfloor\frac{L_i}{16P_d}\right\rfloor
\]
from all previously chosen parallel cuts belonging to markers within
distance $S$.  There are at most $2P_d$ earlier cuts.  They exclude at
most $4P_d\delta_i+2P_d$ integer candidates, fewer than $L_i+1$ when
$L_i$ is sufficiently large.  Thus a cut can always be chosen; use the
first allowed offset in a fixed ordering.  Equal-priority markers are
farther apart than $S$, so simultaneous choices do not interfere.
For $L_i\geq32P_d$, we have $\delta_i\geq L_i/(32P_d)$.

Let $Q_a$ be the resulting box at $a$.  These boxes cover $\R^d$:
rounding a real point to a lattice point and then choosing a covering
marker puts it within scaled distance at most $3/2$ of that marker,
whereas every half-width in \eqref{eq:cuts} is at least two.  Subdivide
each $Q_a$ by all coordinate cut levels of boxes meeting it.  Keep an
elementary full-dimensional box precisely when its interior is not
contained in a box of smaller priority.  Away from the cut hyperplanes,
every point has a unique least-priority containing box.  Membership in
each overlapping box is constant on an elementary interior.  Hence the
retained boxes have disjoint interiors and cover space by their closures.

If $Q_b$ meets $Q_a$, then $\|b-a\|_L\leq6$.  Thus all cut owners used
to subdivide $Q_a$ are within six of $a$, and any two are within twelve
of one another.  Their distinct parallel cuts are separated by
$\delta_i$.  Every retained side has length between $\delta_i$ and
$6L_i$.  Packing bounds the number of overlapping marker boxes and the
number of elementary boxes belonging to each marker.  It also bounds
the number of retained boxes meeting a given retained box.

It remains to refine common boundaries without subdividing the
full-dimensional interiors again.  For a point $x$ and a retained box
$R$ containing it, let $F_R(x)$ be the smallest closed face of $R$
containing $x$.  Put
\[
 Q(x)=\bigcap_{R\ni x}F_R(x).
\]
The point $x$ lies in the relative interior of this rectangle.  At every
$y\in\relint Q(x)$ it lies in the relative interior of the same faces
$F_R(x)$.  The corresponding axis-orthant germs cover a neighborhood,
as they do at $x$.  An additional incident tile at $y$ would therefore
have interior overlapping an already incident tile.  Consequently the
incident tiles and their minimal faces are constant on $\relint Q(x)$.
These relative interiors partition space.  On the boundary of $Q(x)$,
at least one minimal face becomes smaller.  Local finiteness now gives
a regular rectangular CW decomposition.

If a cell belongs to a tile with parent marker $a$, its endpoint levels
come from tiles meeting that tile.  Those tiles have parent markers
within six of $a$, and their cut owners are within twelve of $a$.
Two such cut owners are within twenty-four of one another, still less
than $S$.  The same separation bound therefore holds for every positive
side of every boundary cell.  The packing bounds also give a uniform
bound on their number.  We may take $C=6$ and $c=1/(32P_d)$.

All choices use finite neighborhoods of clopen marker data and finitely
many priority stages.  The choices of offsets and the subsequent
refinements commute with integer translation.  This proves continuity
and covariance as well as the stated geometric bounds.
\end{proof}

\section{Finite carriers with logarithmically short contractions}
\label{sec:carriers}

Use the presentation complex $Y$, its universal cover $\tY$, and
its parity double cover $\hY$ from Section~2. The relation
$tat^{-1}=a^3$ lets us reach distant vertices $a^n$ by paths of
logarithmic length. We need more than short paths: for each interval
we construct a finite subcomplex, called its carrier, and a continuous
family of short paths within it. Carriers for nested intervals must
be nested as well, so that maps on cell boundaries remain in the
carrier chosen for the containing cell. Distances within a carrier
always mean intrinsic path distances unless otherwise specified.

\begin{lemma}[Interval carriers]\label{lem:carriers}
Fix an integer $R\geq1$.  For each integer interval $I=[A,B]$ of length
at most $R$, there is a finite subcomplex $C_I\subset\tY$ with
distinguished vertices $a^n$, $n\in I\cap\Z$, such that:
\begin{enumerate}
\item $C_J\subseteq C_I$ when $J\subseteq I$, and
$a^sC_I=C_{I+s}$ for every integer $s$;
\item $C_I$ has a contraction $H_I$ to $a^A$, fixing $a^A$, whose time
tracks have speed at most $D(R)=O(\log(R+2))$;
\item the contractions are jointly Lipschitz and translation covariant.
At fixed $R$, their joint Lipschitz constants have a finite common bound
$J(R)$.
\end{enumerate}
No quantitative estimate on $J(R)$ is required.
\end{lemma}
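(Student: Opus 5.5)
We build the carriers from a canonical family of logarithmically short edge paths and the disks between them, and obtain the contractions as concatenations of elementary homotopies across relation disks.

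\textbf{Short paths.} Start from the relation, which gives $a^{3k}=ta^kt^{-1}$ in $\Pi_1(Y)$. For each integer $n$, define a canonical word $w_n$ representing $a^n$ by balanced ternary recursion: write $n=3m+r$ with $r\in\{-1,0,1\}$ and set $w_n=t\,w_m\,t^{-1}a^r$. This has length $O(\log(|n|+2))$. Let $\pi_n$ be the corresponding edge path in $\tY$ from $1$ to $a^n$. For $I=[A,B]$, translate these by $a^A$ to get paths $a^A\pi_{n-A}$ from $a^A$ to $a^n$, $n\in I$.

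\textbf{Carrier.} For consecutive $n,n+1$, the loop formed by $a^A\pi_{n-A}$, the $a$-edge from $a^n$ to $a^{n+1}$, and the reverse of $a^A\pi_{n+1-A}$ is null-homotopic in $\tY$. Concretely, the recursion expresses it as a bounded number of lifted relation disks in each layer, nested along the ternary tree, so it bounds an explicit finite disk diagram. Define $C^0_I$ as the union of all these paths and diagrams.

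Nestedness and covariance are not automatic, because the base point $a^A$ depends on $I$. To repair this, let $C_I$ be the union of the translates $a^sC^0_{[0,k]}$ over all subintervals $[s,s+k]\subseteq I$. Then:
\begin{itemize}
\item $C_J\subseteq C_I$ for $J\subseteq I$ holds by construction;
\item $a^sC_I=C_{I+s}$ holds by construction;
\item $C_I$ is finite, since only finitely many subintervals occur;
\item $C_I$ still contains $C^0_I$.
\end{itemize}
Every piece has intrinsic diameter $O(\log(R+2))$ from its base vertex. The base vertices $a^s$ are joined to $a^A$ inside $C^0_I$ by the short paths $a^A\pi_{s-A}$. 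So the intrinsic diameter of $C_I$ stays $O(\log(R+2))$. This uses the diameter bound for each disk diagram, which in turn follows from the recursion: each layer's diagram lies within bounded distance of the layer's path.

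\textbf{Contraction with short time tracks.} First define a deformation of $C_I$ onto a tree $T_I$: the union of the paths $a^A\pi_{n-A}$ together with short paths from every cell of $C_I$ back to them, chosen canonically by the recursion. Do this by collapsing each relation disk from a free edge, layer by layer from the deepest ternary level upward. Each disk has bounded size, so each collapse moves points a bounded distance. The tree has depth $O(\log R)$, and each point is moved within only $O(\log R)$ layers, so the total track length is $O(\log R)$. Then contract $T_I$ radially to $a^A$ along geodesics at speed equal to its height, which is $O(\log R)$. Reparametrizing the concatenation on $[0,1]$ gives speed $D(R)=O(\log(R+2))$, and $a^A$ stays fixed throughout.

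\textbf{Covariance, the joint Lipschitz bound, and the main obstacle.} Define $H_I$ by a rule depending only on $I-A$ and transport it by $a^A$; then $a^sH_I=H_{I+s}a^s$. For fixed $R$ there are finitely many intervals up to translation. Choose each $H_I$ PL on a finite triangulation of $C_I$; it is then Lipschitz, and the maximum over representatives gives the common bound $J(R)$. No estimate on $J(R)$ is needed.

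The hard step is making the contraction genuinely a deformation of the whole union $C_I$, and not just of $C^0_I$. The translated pieces $a^sC^0_{[0,k]}$ overlap and must be collapsed compatibly. I would handle this by showing that all pieces are subcomplexes of a single canonical "ternary tree of disks" over $I$ that is contractible, using the fact that $\tY$ is a tree of planes (Bass--Serre structure). Collapsing must then be done in an order compatible with that tree, so that all overlaps are respected while track lengths stay logarithmic.
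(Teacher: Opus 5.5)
\textbf{There is a genuine gap, in item~(2).} Your treatment of item~(1) works: the union $C_I=\bigcup_{[s,s+k]\subseteq I}a^sC^0_{[0,k]}$ is finite, nested and covariant by construction. The horizontal overshoot of balanced-ternary paths does no harm to the statement.

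\emph{Contractibility.} A contraction of $C_I$ exists only if $C_I$ is contractible, and you never show that your union is. Your remedy, that all pieces sit inside one contractible ``tree of disks'', does not give it, because a subcomplex of a contractible complex can have holes. For example, suppose two pieces contribute vertical $t$-edges over the same Bass--Serre edge but not all the relation cells between them. That union would contain an essential loop, and you would have to rule this out for every overlap.

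\emph{The collapse.} The collapse argument is also not substantiated. Bottom $a$-edges are in general not free. Already for $I=[0,4]$, the pieces based at $1$, $a$ and $a^2$ each contain a relation cell whose lower side uses the edge from $a^2$ to $a^3$. You give no collapse order for which free faces are available. Moreover, to keep the speed $O(\log R)$, the $O(R)$ collapses within one layer would have to be performed simultaneously and compatibly.

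\emph{The missing continuous family.} Points of the long bottom interval must reach $a^A$ along paths of length $O(\log R)$ that depend \emph{continuously} on the point. Your proposal never constructs such a family; it is at most implicit in the unspecified collapse. This family is where the logarithmic bound actually comes from.

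\textbf{How the paper avoids this.} The paper makes the carrier saturated from the start. $C_I$ consists of:
all vertices $a^rt^j$ with $r\in I$ and $0\le j\le K=2+\lceil\log_3(R+1)\rceil$;
all $t$-edges;
all $a$-edges with endpoints in $I$;
all relation cells whose upper edge has endpoints in $I$.
Nesting and covariance are then literal, with no union trick. Each horizontal level is a forest. Each strip between adjacent levels is the mapping cylinder of the upper forest onto the lower one, so all cells of a strip retract downward simultaneously. Stacking the $K$ strips gives a retraction onto the bottom interval with track speed $O(K)$.

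The bottom interval is then contracted along a continuous family $P_u$ of base-three paths. These paths use nonnegative digits, so they stay over $[A,n]$ inside the saturated region. The family is built from the carry homotopies $p_na\simeq p_{n+1}$, run in a fixed budget of parametrization slots, together with an explicit interpolation between consecutive integers.

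If you replace your union by this region, and your collapse by these two stages, the argument goes through.
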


\begin{proof}
Use the common height
\[
 K=2+\lceil\log_3(R+1)\rceil
\]
for all intervals.  The vertices of $C_I$ are
\[
 (r,j)=a^rt^j,\qquad r\in I\cap\Z,\quad 0\leq j\leq K.
\]
Include all vertical $t$-edges, all horizontal edges
$(r,j)\to(r+3^j,j)$ whose endpoints belong to $I$, and every relation
cell whose upper edge has endpoints $r,r+3^{j+1}$ in $I$.  The bottom
of this cell is the three-edge horizontal path at level $j$.
The affine representation $a(x)=x+1$, $t(x)=3x$ shows that the named
vertices are distinct.  The lifted edges and cells are therefore the
indicated subcomplex of the universal cover.  The construction gives
both inclusions and translation covariance literally.

\smallskip\noindent
\emph{Retraction to the bottom.}
Each horizontal level is a forest.  A strip between adjacent levels is
the mapping cylinder of the map from the upper forest to the lower
forest which replaces each upper edge by its three-edge lower path.
Choose a PL rectangle model for a relation cell with three edges on its
bottom side and one edge on each other side.  In cylinder coordinates
$(\theta,h)$ the downward homotopy is
$h\mapsto\max(h-s,0)$.  These maps agree on vertical edges and fix the
lower graph.  Each strip has a fixed time-speed bound.  Retract the
strips successively, from top to bottom, in $K$ equal time intervals.
This is a finite PL deformation to the bottom interval, fixes that
interval, and has time speed $O(K)$.  Its point-variable Lipschitz
constant may be large, but is finite.

\smallskip\noindent
\emph{Compressed paths and carries.}
Translate so that $A=0$.  If $n=\sum_{j=0}^k r_j3^j$ is the base-three
expansion of $n>0$, put
\begin{equation}\label{eq:compressed}
 p_n=t^ka^{r_k}t^{-1}a^{r_{k-1}}\cdots t^{-1}a^{r_0},
\end{equation}
and let $p_0$ be constant.  This path runs from $0$ to $n$, has length
$O(K)$, and has every horizontal coordinate in $[0,n]$.  It lies in
$C_{[0,n]}$.

Ordinary base-three carrying changes $p_na$ to $p_{n+1}$: replace a
terminal $a^3$ by $tat^{-1}$, cancel an adjacent $t^{-1}t$, and continue
upward when required.  At most $K$ carries occur.  Every relation cell
used lies above an increasing horizontal interval in $[0,n+1]$ and at
height at most $K$.  The intermediate edge words have length $O(K)$.
Triangulate each relation polygon by a central vertex.  Crossing one
such polygon is a finite sequence of triangle moves and changes the
length bound only by a fixed constant.  Hence one integer
$M_0=O(K)$ bounds the number of edges in every intermediate path, for
all $0\leq n<B$ simultaneously.

\smallskip\noindent
\emph{A fixed-slot path homotopy.}
We spell out how to preserve parametrizations.  If two edge paths are
related by triangle moves and backtrack moves, with at most $M_0$
edges in every intermediate path, divide the path-parameter interval
$[0,1]$ into $T=M_0+2$ slots of width $1/T$.  Perform the endpoint-fixed
homotopies in the first $T-1$ slots, keeping the final slot constant
until the moving-endpoint correction below.  Each edge occupies one
slot; unused slots are pauses at the endpoint.  The first $T-1$ slots
always contain a spare constant slot.  A pause can be moved
past an edge using
\[
 (A,A,B)\longleftrightarrow(A,B,B).
\]
A triangle insertion and a backtrack deletion take the forms
\[
 (A,A,C)\longleftrightarrow(A,B,C),\qquad
 (A,B,A)\longleftrightarrow(A,A,A).
\]
Move spare slots to the location of a move and back afterward.  There
is always a spare slot because every path uses at most $M_0$ edges.
In every time-slot square between consecutive homotopy rows, the four
corner images lie in a single target simplex.  Triangulate the square
and extend affinely.  On either of its triangles, the derivative in
the path-time direction is the corresponding row-edge vector divided
by the slot width.  It is at most $\Delta T$, where $\Delta$ is a fixed
bound on the diameters of target simplices.  The number of homotopy
rows affects the other derivative, but not this path-time bound.

Parametrize each $p_n$ canonically using $T$ slots of width $1/T$,
traversing its edges first and pausing thereafter; call the resulting
path $P_n$.  Applying the preceding construction to the carry homotopy,
using only the first $T-1$ slots, gives a finite PL endpoint-fixed
homotopy $h_n$ from $p_na$ to $p_{n+1}$ in these exact canonical slots.
Its path-time speed is $O(T)$.  Reserve the last slot for the following
moving-endpoint correction.

\smallskip\noindent
\emph{Interpolation between integers.}
For $u=n+s$, $0\leq s\leq1/2$, follow the nonconstant slots of $P_n$,
move to $n+2s$ in the next slot, pause there until the final slot, and
return to $n+s$ in the final slot.  Formally specify these values at
the slot endpoints and triangulate each parameter rectangle.  Its
corner images lie in a single bottom edge, or in one unchanged edge
of $p_n$.  This gives a finite PL family with speed $O(T)$.

For $1/2\leq s\leq1$, use $h_n(2s-1)$ in the first $T-1$ slots and
return from $n+1$ to $n+s$ in the final slot.  Triangulate the final
slot using the finitely many row parameters of $h_n$; all its images
lie in the bottom edge.  At $s=1/2$ both definitions are exactly the
canonical $p_na$ followed by the return to $n+1/2$.  At $s=0$ and $s=1$
they are exactly $P_n$ and $P_{n+1}$.  Thus adjacent unit intervals
glue literally, not merely up to reparametrization.

We obtain a finite PL family $P_u$, from $0$ to every bottom point
$u\in[0,B]$, with speed $O(T)=O(K)$.  The path $P_0$ is constant.
Reverse this family to contract the bottom interval to zero.  Concatenate
it with the downward retraction, using half the time for each stage.
Composition with the final downward projection does not change the
time-speed bound of the bottom stage.  The resulting contraction fixes
zero and has track speed $O(K)$.

Translate this construction for a general left endpoint $A$.  At
fixed $R$ there are only finitely many interval lengths.  The chosen
finite PL maps are jointly Lipschitz in their intrinsic finite-complex
metrics, so their finitely many constants have a maximum $J(R)$.
The common height and common slot budget preserve the asserted
covariance and uniform track bound.
\end{proof}

\begin{remark}
The contractions for overlapping intervals need not agree.  What is
essential is the literal inclusion $C_J\subseteq C_I$ at one common
height.  Intrinsic distances can only decrease under this inclusion,
so estimates for a boundary cell pass to a containing cell's carrier.
\end{remark}

\section{Relative extension with one controlled coordinate}
\label{sec:extension}

We now extend maps from cell boundaries to cell interiors. The
first coordinate follows the short paths supplied by the carriers;
the other coordinates interpolate between those paths. The key is
to control the first-coordinate speed without multiplying it by
the potentially large joint Lipschitz constant of a contraction.

\begin{lemma}[A relative path contraction]\label{lem:path}
Let a metric space $C$ have a jointly $J$-Lipschitz contraction $H$ to
$p$, fixing $p$, whose time tracks have speed at most $D$.  Fix $q\in C$
and set $Q(u)=H(q,1-u)$.  There is an endpoint-preserving deformation
$\calR(P,s)$ of each $S$-Lipschitz path $P:p\to q$ to $Q$ such that
\[
 \calR(P,0)=P,\quad \calR(P,1)=Q,\quad
 \Lip_u\calR(P,s)\leq3\max(S,D).
\]
The starting parametrization is unchanged.  For an absolute constant
$K_0$, and for input paths with the same bound $S$,
\begin{equation}\label{eq:path-family}
 \begin{aligned}
 &d_\infty\bigl(\calR(P,s),\calR(P',s')\bigr)\\
 &\hspace{1cm}\leq K_0(1+J)
 \bigl(d_\infty(P,P')+(S+D)|s-s'|\bigr).
 \end{aligned}
\end{equation}
\end{lemma}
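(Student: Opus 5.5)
The plan is to work entirely inside the square map $\Phi_P(u,\sigma)=H(P(u),\sigma)$ on $[0,1]^2$. Its bottom edge is $P$. Its top edge is constant at $p$. Its left edge is constant at $p$, because $H$ fixes $p$. Its right edge is the track $\sigma\mapsto H(q,\sigma)$, which is $Q$ reversed. The deformation $\calR(P,s)$ will be a staged sweep of a path through this square. The sweep may only use three kinds of motion:
\begin{itemize}
\item moving in $u$ along the bottom edge, which costs speed $S$ in the $u$-direction;
\item moving in $\sigma$ at a fixed point, which costs speed $D$;
\item moving freely along the top edge, where everything maps to $p$.
\end{itemize}
A horizontal move at an intermediate height is never allowed, since it would cost a factor $J$ in the path-speed. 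This is exactly why the estimate $\Lip_u\calR(P,s)\leq 3\max(S,D)$ can avoid $J$. The constant $J$ then enters only in the uniform-distance estimate \eqref{eq:path-family}, where a base point $P(x)$ is pushed along $H$.

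I split the deformation parameter $s\in[0,1]$ into four equal stages and the path parameter into three slots of width $1/3$.

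\emph{Stage 1.} Reparametrize linearly, from $P$ to the path that traverses $P$ in the first slot, at speed $3S$, and then pauses at $q$. Throughout, the reparametrization $u\mapsto P(\min(1,\lambda u))$ with $\lambda\in[1,3]$ has speed at most $3S$. It moves points by at most $S|\Delta\lambda|$, so this stage is $O(S)$-Lipschitz in $s$ uniformly.

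\emph{Stage 2.} Grow a spur at the end: with height $h$ rising from $0$ to $1$, the second slot runs the track $H(q,\cdot)$ up to height $h$, and the third slot runs it back down. Both slots have speed at most $3D$.

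\emph{Stage 3.} With $x$ decreasing from $1$ to $0$, the three slots are as follows:
\begin{itemize}
\item slot one is $P(3ux)$ for $u\in[0,1/3]$, with speed at most $3S$;
\item slot two is the full track $\sigma\mapsto H(P(x),\sigma)$, with speed at most $3D$, ending at $p$;
\item slot three is the track of $q$ from height $1$ back to $q$.
\end{itemize}
The junctions match: slot one ends at $P(x)$, where slot two begins, and slot two ends at $p=H(q,1)$, where slot three begins. At $x=1$ this agrees with the end of Stage 2. At $x=0$ the first two slots are constant at $p$, and the third slot is $Q$ compressed into the last third.

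\emph{Stage 4.} Reparametrize linearly to stretch this compressed copy of $Q$ back to its own parametrization $Q(u)=H(q,1-u)$, with speed at most $3D$. Every stage fixes both endpoints, and the final path is exactly $Q$. The starting path is exactly $P$, so the starting parametrization is unchanged.

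For \eqref{eq:path-family}, I compare $\calR(P,s)$ with $\calR(P',s')$ stage by stage, inserting $\calR(P',s)$ and using the triangle inequality. Pure reparametrizations and spurs change by at most $d_\infty(P,P')$ plus $3(S+D)|s-s'|$ times the stage rescaling, which is a factor $4$. The only place $J$ enters is slot two of Stage 3. There, joint Lipschitzness gives
\[
d\bigl(H(P(x),\sigma),H(P'(x'),\sigma)\bigr)\leq J\bigl(d_\infty(P,P')+S|x-x'|\bigr),
\]
and $|x-x'|\leq 4|s-s'|$. Collecting the pieces yields an absolute $K_0$ with the factor $(1+J)$. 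Note that $d_C$ can be taken to be the metric on $C$ itself, so no carrier-specific facts are needed.

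The main obstacle is Stage 3: one must check that moving the "vertical cut" $x$ changes nothing at the top, which is constant at $p$. This is what keeps the path-time speed free of $J$. One must also check that the stages glue literally at their common parameter values, especially at the Stage 2/Stage 3 interface with $x=1$, $h=1$. This requires the slots to be arranged identically on both sides, which I have built into the slot layout above.
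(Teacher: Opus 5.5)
Your proposal is correct and follows the paper's proof essentially verbatim. You use the same four quarter-stages: compress $P$ into the first third, grow the spur $H(q,[0,h])$ and its reverse, sweep the cut $x$ from $1$ to $0$ with slots $P(3ux)$, $H(P(x),\cdot)$, $Q$, and then expand $Q$. As in the paper, $J$ enters only through slot two of the sweep in the uniform estimate, and your explicit junction checks and the rescaling $|x-x'|\leq4|s-s'|$ supply details the paper leaves implicit.
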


\begin{proof}
Use four stages, each on one quarter of the deformation parameter.
First compress $P$ from the whole time interval into its first third,
leaving an ending pause.  Next keep $P$ in the first third and grow
the spur $H(q,[0,r])$ followed by its reverse in the other two thirds,
as $r$ increases from zero to one.  In the third stage decrease $r$
from one to zero, putting the following paths in three equal slots:
\begin{equation}\label{eq:three-slots}
 P(ru),\qquad H(P(r),u),\qquad Q(u),\qquad 0\leq u\leq1.
\end{equation}
All junctions match.  At $r=0$ the first two slots are constant at $p$,
because $H$ fixes $p$.  Finally expand the last copy of $Q$ to occupy
the whole interval, removing the initial pause.

In the first and last stages every occupied interval has length at
least one third.  In \eqref{eq:three-slots}, the speeds are bounded
by $3S$, $3D$, and $3D$, respectively.  The spur stage has the same
bound.  This proves the path-time estimate.

All parameter changes in these formulas have universal Lipschitz
bounds.  For example, changing $r$ changes $P(r)$ by at most
$S|\Delta r|$, and subsequent evaluation under $H$ costs at most a
factor $J$.  Comparing input paths in the uniform metric and applying
these estimates on each stage proves \eqref{eq:path-family}; estimates
across stage boundaries follow by concatenation.  Notice that $J$
does not enter the bound in the path-time direction.
\end{proof}

\begin{lemma}[Anisotropic extension]\label{lem:extension}
Use the atlas of Lemma~\ref{lem:atlas} with $L_1=L$ and $L_i=M$ for
$i>1$, and the carriers of Lemma~\ref{lem:carriers} with $R=CL$.
There is a continuous map $F_x:\R^d\to\tY$, canonically determined
by the pointed atlas, such that a cell with first-coordinate interval
$I$ maps into $C_I$, and a cell with constant first coordinate $n$
maps constantly to $a^n$.  It satisfies
\begin{equation}\label{eq:coordinate-bounds}
 \Lip_1 F_x\leq A_d\frac{\log(L+2)}L,\qquad
 \Lip_i F_x\leq\frac{B_d(L)}M\quad(i>1),
\end{equation}
where $A_d$ depends only on $d$ and $B_d(L)<\infty$ is independent of
$M$ and $x$.  The maps are translation covariant, and
$x\mapsto F_x(0)$ is continuous and finite-valued.
\end{lemma}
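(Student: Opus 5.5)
We define $F_x$ by induction on the skeleta of the rectangular CW decomposition of Lemma~\ref{lem:atlas}, taking care never to multiply the first-coordinate speed by a joint Lipschitz constant. On the $0$-skeleton a vertex with first coordinate $n$ goes to $a^n$. A cell $\sigma$ with constant first coordinate $n$ maps constantly to $a^n$ in all dimensions; this is consistent, since all its boundary cells also have first coordinate $n$. This handles every cell without a positive side in direction $1$. The other cells have the form $\sigma=I\times\sigma'$ with $I=[A,B]$, $B-A\geq cL$, and $\sigma'$ a rectangle in the remaining coordinates. We extend across these cells by increasing dimension. The requirement $F_x(\sigma)\subseteq C_I$ passes from a boundary cell to $\sigma$ by Lemma~\ref{lem:carriers}(1): a boundary cell has first-coordinate interval $J\subseteq I$, so its image already lies in $C_J\subseteq C_I$.

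\textbf{One-cells in direction~$1$.} An edge $[A,B]\times\{w\}$ is sent to the bottom path of $C_I$, parametrized at unit speed in the horizontal coordinate. This gives speed $O(1)\leq O(\log(L+2)/L)\cdot CL$ relative to an interval of length at most $CL$.

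\textbf{Higher cells with a direction-$1$ side.} Rescale each slice $\{w\}$ with $w\in\sigma'$ to a path parameter $u\in[0,1]$ along the first coordinate. The boundary data then become a family of paths $P_w:a^A\to a^B$ in $C_I$, given for $w\in\partial\sigma'$, together with the constant endpoints. Set $Q(u)=H_I(a^B,1-u)$, where $H_I$ is the contraction of Lemma~\ref{lem:carriers}(2). Let $\rho(w)\in[0,1]$ be a normalized scaled distance from $w$ to $\partial\sigma'$, rising linearly over a collar of width comparable to $cM$ in each coordinate $i>1$. Define $F_x$ on the slice over $w$ by $\calR(P_{\pi(w)},\rho(w))$, where $\pi$ is a radial projection of $\sigma'$ to its boundary and $\calR$ is the deformation of Lemma~\ref{lem:path}. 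Near the center of $\sigma'$ the slice is exactly $Q$, so the definition is continuous and agrees with the boundary data.

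\textbf{First-coordinate bound.} Inductively each $P_w$ has $u$-speed $S\leq S_k$. Lemma~\ref{lem:path} gives $u$-speed at most $3\max(S,D(CL))$, with no factor $J$. Hence after at most $d$ steps the $u$-speed is at most $3^d\cdot O(\log(CL+2))$. Dividing by $|I|\geq cL$ gives the bound $A_d\log(L+2)/L$.

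\textbf{Transverse bound.} By \eqref{eq:path-family}, changing $w$ changes the slice in $d_\infty$ by at most $K_0(1+J(CL))\bigl(d_\infty(P_w,P_{w'})+(S+D)|\Delta\rho|\bigr)$. Since $\rho$ and $\pi$ are Lipschitz with constants $O(1/M)$ in the coordinates $i>1$, the transverse constants multiply through the induction by factors depending only on $L$ and $d$. They therefore stay of the form $B_d(L)/M$, uniformly in $M$. Combinatorial bounds such as the number of boundary cells, which control how the radial projection behaves, come from Lemma~\ref{lem:atlas}(3).

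Covariance and continuity hold because every choice is canonical from the pointed atlas and the carriers are covariant under $a^s$. Since $F_x(0)$ depends only on a bounded part of the atlas, $x\mapsto F_x(0)$ is continuous and finite-valued.

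The main obstacle is gluing the boundary families coherently. A boundary cell's own map is defined through its own carrier and deformations, so its paths need not be $S$-Lipschitz in the rescaled parameter of the larger cell. In addition, the subdivided faces force the radial parametrization to handle several cells per face. I would first reparametrize every slice to the fixed first-coordinate interval $I$, using the bottom-path convention at the endpoints. Only then would I check that the Lipschitz constants of the boundary slices stay bounded by the $S_k$ from the previous dimension; this works because the faces are unions of cells whose slice intervals are subintervals of $I$.
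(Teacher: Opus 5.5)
\textbf{Gap: the one-cell base case.} Your treatment of cells of dimension at least two is the same as the paper's: the radial collar, the deformation $\calR$ of Lemma~\ref{lem:path}, and the constant family $Q$ near the center of $\sigma'$. The problem is the one-cells, and it breaks the first-coordinate estimate, which is the main content of the lemma.

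You send a one-cell $[A,B]\times\{w\}$ to the bottom path of $C_I$ at unit horizontal speed. Adjacent bottom vertices $a^r,a^{r+1}$ are at carrier distance bounded below by a positive constant independent of $L$. So on that one-cell $\Lip_1F_x$ is bounded below by a constant, whereas the statement requires $\Lip_1F_x\leq A_d\log(L+2)/L\to0$. Proposition~\ref{prop:parity} uses exactly this smallness.

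Your comparison ``$O(1)\leq O(\log(L+2)/L)\cdot CL$'' mixes units. The left side is a physical speed. The right side bounds the speed in the rescaled parameter $u\in[0,1]$, and in that parameter your path has speed $B-A$, which can be as large as $CL$.

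The error propagates. The boundary paths of a $2$-cell then have $u$-speed $S\asymp L$, so Lemma~\ref{lem:path} only gives $3\max(S,D)\asymp L$. Every higher cell then again has physical first-coordinate speed of order one. Your claimed bound $3^d\cdot O(\log(CL+2))$ therefore does not follow from your own base case.

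\textbf{The fix, which is what the paper does.} On a one-cell varying in the first coordinate, use the reversed time track $Q_I(u)=H_I(a^B,1-u)$ of the carrier contraction, affinely parametrized over $I$. Its $u$-speed is at most $D=D(CL)=O(\log(L+2))$, so its physical speed is at most $D/(cL)$. Producing this logarithmically short path is the reason the carriers were built; the bottom path defeats their purpose.

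With this base case your induction goes through, provided you track physical speeds rather than $u$-speeds. A boundary line of $I\times\sigma'$ crosses cells with subintervals $J\subseteq I$, each of physical speed at most $A_{k-1}D/L$. Rescaling to $I$, whose length is at most $CL$, gives $S=CA_{k-1}D$. Lemma~\ref{lem:path} then yields the recursion $A_k=3\max(CA_{k-1},1)/c$, not a pure factor $3$ per level. This still depends only on $d$.

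A smaller imprecision in your transverse estimate: $\pi$ itself is only $O(1)$-Lipschitz into $\partial\sigma'$ with its rectilinear path metric. The factor $1/M$ comes from the $B_{k-1}/M$ bound on the boundary family and from the collar parameter $\rho$.
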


\begin{proof}
All estimates in the induction use the intrinsic metric of the
appropriate carrier.  Inclusions of interval carriers are
nonexpansive for these metrics.  Put $D=D(CL)$ and $J=J(CL)$.
Map every cell with constant first coordinate $n$ constantly to $a^n$.
On a one-dimensional cell $I=[A,B]$ varying in the first coordinate,
use $Q_I(u)=H_I(a^B,1-u)$, affinely parametrized over $I$.
Its first-coordinate speed is at most $D/(cL)$.

Suppose the map has been defined on all cells of dimension below $k$.
On such cells suppose its physical coordinate bounds are
$A_{k-1}D/L$ and $B_{k-1}/M$.  Consider a $k$-cell $I\times T$, where
$I=[A,B]$ varies in coordinate one and $T$ is a transverse rectangle.
The end faces are constant at $a^A$ and $a^B$.  The remaining boundary
defines paths
\[
 P_z(u)=F_x(A+(B-A)u,z),\qquad z\in\partial T.
\]
All these paths lie in $C_I$.  A first-coordinate boundary line may
pass through several smaller cells, but the bounds add over their
physical lengths.  Consequently
\[
 \Lip_u P_z\leq CA_{k-1}D=:S.
\]
Similarly, the boundary family is $B_{k-1}/M$-Lipschitz in the intrinsic
rectilinear path metric of $\partial T$, with the uniform path metric
in the range.  This assertion uses carrier inclusions and does not
require the different contractions to agree.

Let $o$ be the center of $T$ and let $\ell_j$ be its positive side
lengths.  Set
\[
 \rho(z)=\max_j\frac{2|z_j-o_j|}{\ell_j},\qquad
 \pi(z)=o+\frac{z-o}{\rho(z)}\quad(\rho(z)>0).
\]
For $\rho(z)\geq1/2$ define
\begin{equation}\label{eq:collar-extension}
 F_x(A+(B-A)u,z)
 =\calR\bigl(P_{\pi(z)},2(1-\rho(z))\bigr)(u).
\end{equation}
For $\rho(z)\leq1/2$ use $Q_I(u)$.  At the outer boundary the
deformation parameter is zero, so the prescribed boundary map is
preserved exactly.  At the inner boundary it is one.  Thus the inner
half-box is a constant path family and the radial singularity never
occurs.

The first-coordinate bound in this extension is
\[
 \frac{3\max(CA_{k-1},1)D}{cL}.
\]
We may therefore take $A_1=1/c$ and
\[
 A_k=\frac{3\max(CA_{k-1},1)}c,
\]
enlarging the sequence to be nondecreasing if necessary.

Every transverse length lies between $cM$ and $CM$.  On the outer
half-box, $\pi$ has a piecewise Lipschitz constant bounded in terms of
$c,C,d$, and $2(1-\rho)$ has coordinate Lipschitz constants at most
$4/(cM)$.  The image under $\pi$ of a coordinate segment has boundary
path length bounded by a fixed multiple of its length.  Applying
\eqref{eq:path-family} gives, after increasing a dimension-dependent
constant $K_d$,
\begin{equation}\label{eq:transverse-recurrence}
 B_k\leq K_d(1+J)
 \bigl(B_{k-1}+(CA_{k-1}+1)D\bigr),\qquad B_1=0.
\end{equation}
For a one-dimensional $T$, its two outer intervals project to their
respective endpoints; the inner interval is constant, so the same
estimate applies.  In all dimensions one splits coordinate segments
at the inner box and at the finitely many radial-face changes.
Thus $B_k$ is finite after $L$ is fixed and is independent of $M$.
The first physical coordinate is never mixed with a transverse
coordinate in \eqref{eq:collar-extension}.

The maps agree on every shared boundary.  Local finiteness gives a
global continuous map.  A compact coordinate segment is partitioned
into finitely many cell pieces, and summing the coordinate estimates
over their lengths proves the global bounds.  Since
$D(CL)=O(\log(L+2))$, these are \eqref{eq:coordinate-bounds}.

The rule on each cell is determined by its rectangle, its boundary
map and the specified interval contraction.  Induction therefore
proves translation covariance.  At fixed $L,M$, the cells used to
evaluate $F_x(0)$ and all their boundary recursions lie in a bounded
coordinate box.  Integer cuts and finite marker priorities have only
finitely many patterns there.  Each pattern occurs on a clopen subset
of $X$, and the construction is deterministic on it.  Hence
$x\mapsto F_x(0)$ is continuous and finite-valued.  This statement
does not require a uniform binary coding radius on the noncompact
free shift.
\end{proof}

\section{Approximate parity fields and finite graph targets}
\label{sec:positive}

The geometric construction first gives an approximate parity map:
each generator nearly exchanges the two sheets of $\hY$. We then
map the sheets to pairs of probability distributions on $H$. Their
support condition turns the approximation into an exact graph
homomorphism by choosing a vertex of maximum mass.

\begin{proposition}\label{prop:parity}
For every free zero-dimensional Polish $\Z^d$-action and every
$\eps>0$, there is a finite-valued continuous map $f:X\to\hY$ such that
\[
 \dist\bigl(f(e_i\cdot x),\tau f(x)\bigr)<\eps
 \qquad(x\in X,\ 1\leq i\leq d).
\]
\end{proposition}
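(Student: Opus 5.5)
The plan is to take $f(x)=p(F_x(0))$, where $F_x$ is the map of Lemma~\ref{lem:extension} built in a sheared coordinate system and $p\colon\tY\to\hY$ is the parity covering, and to get the sheet flip from the way $F_x$ changes when the base point moves.

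\emph{Step 1: choose coordinates.} Change the basis of $\Z^d$ so that every standard generator has first coordinate $1$. Put $v_1=e_1$ and $v_j=e_j-e_1$ for $j>1$. In the basis $(v_1,\ldots,v_d)$ we have $e_1=v_1$ and $e_j=v_1+v_j$. Relabelling the action by this automorphism of $\Z^d$ gives another free continuous $\Z^d$-action on the same zero-dimensional Polish space. I will apply Lemma~\ref{lem:atlas} and Lemma~\ref{lem:extension} to this relabelled action, with parameters $L$ and $M$ fixed below, to get translation covariant maps $F_x\colon\R^d\to\tY$. Cells with constant first coordinate $n$ go to $a^n$, and $x\mapsto F_x(0)$ is continuous and finite-valued.

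\emph{Step 2: how covariance produces the flip.} The orbit of $g\cdot x$ is the orbit of $x$ with origin moved by $g$. The vertex names $a^n$ in $\tY$ record the first coordinate relative to the origin. Translation covariance should therefore read
\[
 F_{g\cdot x}(u)=a^{-g_1}F_x(u+g),
\]
where $g_1$ is the first coordinate of $g$ in the new basis. I will check this against the inductive rule: constant cells and carriers satisfy $a^sC_I=C_{I+s}$, the contractions are covariant, and the rest of the construction is determined by the pointed atlas.

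Define $f(x)=p(F_x(0))$. This is continuous and finite-valued because $F_x(0)$ is. Left multiplication by $a$ descends to $\tau$, and every generator $e_i$ has $g_1=1$. Hence
\[
 f(e_i\cdot x)=p\bigl(a^{-1}F_x(e_i)\bigr)=\tau\, p\bigl(F_x(e_i)\bigr),
\]
using $\tau^{-1}=\tau$. Since $\tau$ and $p$ are isometric or nonexpanding, it follows that
\[
 \dist\bigl(f(e_i\cdot x),\tau f(x)\bigr)\le d_{\tY}\bigl(F_x(e_i),F_x(0)\bigr).
\]

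\emph{Step 3: the estimate.} Reach $e_i$ from $0$ by a unit step in coordinate $1$, followed for $i>1$ by a unit step in coordinate $i$. Intrinsic carrier distances dominate distances in $\tY$, so \eqref{eq:coordinate-bounds} gives
\[
 d_{\tY}\bigl(F_x(e_i),F_x(0)\bigr)\le A_d\frac{\log(L+2)}{L}+\frac{B_d(L)}{M}.
\]
First choose $L$ so large that the first term is below $\eps/2$. Since $B_d(L)$ does not depend on $M$, then choose $M$ so large that the second term is below $\eps/2$.

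\emph{Main obstacle.} I expect the delicate point to be Step 2: checking that the covariance of Lemma~\ref{lem:extension} is exactly the twisted rule $F_{g\cdot x}=a^{-g_1}F_x(\cdot+g)$ rather than plain equivariance. This is the only source of the flip $\tau$. I would trace it through the induction: the constant cells are defined via the orbit coordinate $n$, and the deck action on carriers is literal. A secondary check is that the atlas and extension lemmas really apply after the change of basis, but they were stated for arbitrary free $\Z^d$-actions, so this should be harmless.
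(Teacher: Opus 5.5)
Your proposal is correct and follows the paper's proof essentially step for step. It uses the same unimodular basis $v_1=e_1$, $v_j=e_j-e_1$, and the same twisted covariance: your $F_{g\cdot x}(u)=a^{-g_1}F_x(u+g)$ is the paper's identity $F_{g\cdot x}(u-g)=a^{-g_1}F_x(u)$ after substituting $u\mapsto u+g$. It also uses the same projection of $F_x(0)$ to $\hY$ and the same two-segment estimate with $L$ chosen before $M$; the paper likewise just asserts the covariance from the construction, so the point you flag as the main obstacle is handled there at no greater level of detail.
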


\begin{proof}
Use the unimodular basis
\[
 v_1=e_1,\qquad v_j=e_j-e_1\quad(2\leq j\leq d).
\]
The first coordinate of a group element in this basis is the sum of
its standard coordinates.  In particular every original generator
has first coordinate one, and $e_j=v_1+v_j$ for $j>1$.
Apply Lemma~\ref{lem:extension} in these coordinates.  Changing the
orbit origin by $g$ translates the atlas by $-g$, so covariance is
\begin{equation}\label{eq:covariance}
 F_{g\cdot x}(u-g)=a^{-g_1}F_x(u).
\end{equation}
Project $F_x(0)$ to $\hY$ and call the result $f(x)$.  Projection is
nonexpansive and left multiplication by $a$ induces $\tau$.  Evaluate
\eqref{eq:covariance} at $u=g$ and use a first-coordinate segment,
followed when needed by one transverse segment.  The required error
is at most
\[
 A_d\frac{\log(L+2)}L+\frac{B_d(L)}M.
\]
Choose $L$ first to make the first term small, and then choose $M$.
Continuity and finite-valuedness follow from
Lemma~\ref{lem:extension}.
\end{proof}

Recall the probability simplex and complete adjacency of supports
from Section~2. For a finite graph $H$, define
\[
 \calD(H)=\bigl\{(p,q)\in\Delta(V(H))^2:
     \supp p\text{ is completely adjacent to }\supp q\bigr\}.
\]
This is a finite polyhedral complex, a union of products of simplex
faces.  The involution $\tau_H(p,q)=(q,p)$ is free when $H$ is loopless.

\begin{lemma}\label{lem:hom-complex}
If $H$ contains an odd closed walk $\gamma$ with $[\gamma]^2=1$ in
$\Pi_1^*(H)$, there is an equivariant continuous map
$\eta:\hY\to\calD(H)$.
\end{lemma}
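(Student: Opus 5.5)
The plan is to build $\eta$ cell by cell on $\hY$. On one sheet we make arbitrary choices, and on the other we define $\eta$ as $\tau_H\circ\eta\circ\tau$. This works because $\tau$ is free and permutes the cells of $\hY$ freely: the two vertices, the two $a$-edges, the two $t$-edges and the two relation disks are each swapped by $\tau$. So it suffices to choose images for one vertex $v_0$, one $a$-edge from $v_0$ to $v_1=\tau v_0$, the $t$-loop at $v_0$, and the lifted relation disk based at $v_0$. We then only have to check that the boundary of that disk maps to a null-homotopic loop in $\calD(H)$.

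\emph{Vertices and edges.} Write $\gamma=(b_0,b_1,\ldots,b_m)$ with $b_m=b_0$ and $m$ odd. Send $v_0$ to $x=(\delta_{b_0},\delta_{b_1})\in\calD(H)$, and hence $v_1$ to $\tau_H x=(\delta_{b_1},\delta_{b_0})$. To any walk we attach a path in $\calD(H)$ by the following rule. Suppose the current point is $(\delta_u,\delta_w)$ with $u\sim w$, and the next vertex of the walk is $u'\sim w$. We slide the first coordinate linearly from $\delta_u$ to $\delta_{u'}$ inside $\Delta(\{u,u'\})\times\{\delta_w\}$. This stays in $\calD(H)$ because $\{u,u'\}$ is completely adjacent to $\{w\}$. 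The next step then slides the second coordinate, and so on, alternating.

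Read $\gamma$ starting from the oriented edge $(b_0,b_1)$ and continue through $b_2,\ldots,b_m,b_1$. Because $m$ is odd, the alternation ends at $(\delta_{b_1},\delta_{b_0})=\tau_H x$. Call this path $g$; it is the image of the chosen $a$-edge. The other $a$-edge then maps to $\tau_H g$, which runs from $\tau_H x$ to $x$. Map the $t$-loop at $v_0$ constantly to $x$, and the other $t$-loop constantly to $\tau_H x$.

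\emph{The relation disk.} The lift of $tat^{-1}a^{-3}$ at $v_0$ maps to the loop $g\,(g\cdot\tau_H g\cdot g)^{-1}$, up to constant pauses. It remains to show that this loop is null-homotopic in $\calD(H)$.

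For this, consider the subcomplex $\calD_0\subseteq\calD(H)$ built from the points $(\delta_u,\delta_w)$ with $u\sim w$, the slide edges, and the squares $\Delta(\{u,u'\})\times\Delta(\{w,w'\})$ for $u,u'$ both adjacent to $w,w'$. Each such square is convex and lies in $\calD(H)$. The key claim is that the walk-to-path rule above induces a well-defined homomorphism
\[
 \Phi:\Pi_1^*(H,b_0)\longrightarrow\pi_1\bigl(\calD(H),x\bigr)/\!\sim,
\]
where loops of odd length are allowed to end at $\tau_H x$, and their images are composed using $\tau_H$. Under this homomorphism the class $[\gamma]$ corresponds to $g$, and concatenating $\gamma$ with itself corresponds to $g\cdot\tau_H g$. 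To prove the claim I would check two local moves. A spur $(u,v,u)$ maps to a slide followed by its reverse, which is a backtrack. A replacement $(u,v,w)\to(u,v',w)$ changes the path only inside a single square $\Delta(\{u,w\})\times\Delta(\{v,v'\})$ or its transpose, so it is a homotopy there. By the local description of $\Pi_1^*$ in Section~2, these two moves generate all relations.

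Granting the claim, $[\gamma]^2=1$ gives that $g\cdot\tau_H g$ is null-homotopic rel endpoints. Then $g\cdot\tau_H g\cdot g\simeq g$, so $g(g\tau_H g g)^{-1}\simeq gg^{-1}$, which is null-homotopic. Hence the disk extends, and the choice $t\mapsto$ constant is consistent because $g^3\simeq g$.

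The main obstacle I expect is making the claim precise. The issue is the parity bookkeeping: odd walks move between $x$ and $\tau_H x$, so the correct statement is really about a map from the bipartite-double-cover version of $H^\square$ into $\calD(H)$ covering $\tau_H$. I would set this up directly as an equivariant cellular map from the bipartite double cover of $H^\square$ to $\calD(H)$. On that complex, $\gamma$ lifts to a path from $b_0^+$ to $b_0^-$, and $\gamma^2$ lifts to a loop. The hypothesis $[\gamma]^2=1$ says that this loop bounds a disk in $H^\square$, and this disk lifts to the double cover because the cover is determined by length parity. Pushing the disk forward through the cellular map gives the required null-homotopy.
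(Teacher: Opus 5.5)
Your proposal is correct and is essentially the paper's proof. It uses the same equivariant definition on $\hY$ with constant $t$-loops, the same reduction of the relation cell to a null-homotopy of $g\cdot\tau_H g$, and the same use of convex fixed-shore fibers and product squares $\Delta(A)\times\Delta(B)$ to realize the $\Pi_1^*$-relations in $\calD(H)$. Your alternating slide path is essentially the paper's image of the lifted walk, with its detours through the points $(\delta_v,\delta_{s(v)})$ straightened.

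Only minor tidying is needed:
\begin{enumerate}
\item A spur or a replacement also alters the neighbouring slides. So besides the square you need the convex fibers over $\delta_u$ and $\delta_w$, for example $\Delta(\{r,v,v'\})\times\{\delta_u\}$ up to transposing the factors, when $r$ precedes $u$.
\item Walks in the move sequence should be read between two copies of the fixed edge $b_0b_1$, as you already do for $\gamma$, so that endpoints stay fixed.
\item A cellular map on the double cover of $H^\square$ needs vertex images. The paper gets these by choosing a neighbour $s(v)$ of each vertex and sending $v^{+}\mapsto(\delta_v,\delta_{s(v)})$.
\end{enumerate}
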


\begin{proof}
Work first in the connected component containing $\gamma$.  Choose a
neighbor $s(v)$ of each vertex.  Map its bipartite double cover to
$\calD(H)$ by
\[
 v^+\longmapsto(\delta_v,\delta_{s(v)}),\qquad
 v^-\longmapsto(\delta_{s(v)},\delta_v).
\]
The edge from $v^+$ to $w^-$ maps to the two straight segments through
$(\delta_v,\delta_w)$.  These lie in convex fixed-shore fibers, and
the construction is equivariant under swapping the two shores.

A lifted four-walk $v^+,w^-,x^+,y^-,v^+$ maps to a null-homotopic
loop.  Indeed, straighten its auxiliary-neighbor detours in the
fixed-shore fibers.  The resulting loop bounds
\[
 \Delta(\{v,x\})\times\Delta(\{w,y\})\subseteq\calD(H).
\]
Repeated vertices just give degenerate rectangles.  Spurs also
contract.  The hypothesis on $[\gamma]^2$ therefore implies that the
lift of $\gamma^2$ maps to a null-homotopic loop.

Write $\alpha:p\to\tau_Hp$ for the image of the lift of $\gamma$.
The loop $\alpha*(\tau_H\alpha)$ is null-homotopic.  Map the two
vertices of $\hY$ to $p,\tau_Hp$, its two positive $a$-edges to
$\alpha,\tau_H\alpha$, and its two $t$-loops constantly.  The
relation $tat^{-1}a^{-3}$, based at the first sheet, maps to
\[
 \alpha*\operatorname{reverse}
       (\alpha*(\tau_H\alpha)*\alpha).
\]
After cancelling the first spur, this is the inverse of
$\alpha*(\tau_H\alpha)$.  Fill that relation cell and define the
filling of the other cell by the involution.  This gives the required
equivariant map; inclusion of the chosen component into $H$ completes
the construction.
\end{proof}

\begin{proof}[Proof of Theorem~\ref{intro:positive}]
Put $n=|V(H)|$.  By uniform continuity of $\eta$ on the finite complex
$\hY$, choose the error in Proposition~\ref{prop:parity} small enough
that the composite $\eta f(x)=(p_x,q_x)$ satisfies
\[
 \|p_{e_i\cdot x}-q_x\|_\infty<\frac1{2n}.
\]
Choose $c(x)$ to maximize $p_x(v)$, breaking ties by a fixed ordering
of $V(H)$.  The map $\eta f$ is continuous and finite-valued, so this
selection is continuous.  Also $p_x(c(x))\geq1/n$.  If $y=e_i\cdot x$,
then
\[
 p_x(c(x))>0,\qquad
 q_x(c(y))\geq p_y(c(y))-\|p_y-q_x\|_\infty>0.
\]
The defining support condition of $\calD(H)$ shows that $c(x)$ and
$c(y)$ are adjacent.  Thus $c$ is the asserted continuous homomorphism.
\end{proof}

\section{Two further remarks}\label{sec:remarks}

\subsection{The converse of the positive theorem}
\label{subsec:converse}

The condition in Theorem~\ref{intro:positive} can be weakened: the
class of the odd closed walk need only have finite order.  In dimension
two this is \cite[Theorem~3.5.1]{GJKS}.  The construction above gives the
same conclusion in every finite dimension.

\begin{proposition}\label{prop:finite-order}
Let $H$ be a finite simple graph containing an odd closed walk $\gamma$
whose class in $\Pi_1^*(H)$ has finite order.  For every $d\geq1$, every
free continuous $\Z^d$-action on a zero-dimensional Polish space admits
a continuous homomorphism from its standard Schreier graph to $H$.
\end{proposition}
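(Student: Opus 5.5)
The approach is to rerun the construction of Sections~\ref{sec:carriers}--\ref{sec:positive}, replacing the relation $tat^{-1}=a^3$ by $tat^{-1}=a^q$ for a suitable odd $q$ determined by the order of $[\gamma]$. First, a parity observation. Let $m$ be the order of $[\gamma]$ in $\Pi_1^*(H)$. Since $\ell([\gamma])=1$ and $\ell([\gamma]^m)=\ell(1)=0$, the order $m$ is even. If $m/2$ happens to be odd, then $\gamma^{m/2}$ is an odd closed walk whose class squares to $1$, and Theorem~\ref{intro:positive} applies directly. In general, though, $m/2$ may be even, so I would not try to reduce to that theorem. Instead I set $q=m+1$, which is odd and satisfies $q\geq 3$.

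Step one is to replace $Y$ by the presentation complex of $\langle a,t\mid tat^{-1}=a^q\rangle$. The parity character $\chi(a)=1$, $\chi(t)=0$ is still well defined because $q$ is odd. This gives a double cover $\hY$ with the deck involution $\tau$ induced by left multiplication by $a$, exactly as before. I would then check that Lemmas~\ref{lem:carriers}, \ref{lem:extension} and Proposition~\ref{prop:parity} go through verbatim with $3$ replaced by $q$:
\begin{itemize}
\item the carriers use horizontal edges $(r,j)\to(r+q^j,j)$ and height $K=2+\lceil\log_q(R+1)\rceil$;
\item the compressed paths use base-$q$ digits, so they have length $O(qK)$, and carrying replaces $a^q$ by $tat^{-1}$;
\item the affine model is $a(x)=x+1$, $t(x)=qx$.
\end{itemize}
Since $q$ is fixed once $H$ and $\gamma$ are fixed, all constants change only by factors depending on $q$. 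In particular $D(R)=O(\log(R+2))$ still holds, which is all that Proposition~\ref{prop:parity} uses.

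Step two is the analogue of Lemma~\ref{lem:hom-complex}, and I expect it to be the only point needing care. As before, map the bipartite double cover of the component of $\gamma$ into $\calD(H)$ equivariantly. Lifted four-walks and spurs map to null-homotopic loops, so a closed lifted walk whose projection is trivial in $\Pi_1^*(H)$ maps to a null-homotopic loop. Let $\alpha:p\to\tau_Hp$ be the image of the lift of $\gamma$. Map the two vertices of $\hY$ to $p$ and $\tau_Hp$, the two $a$-edges to $\alpha$ and $\tau_H\alpha$, and the $t$-loops constantly. The relation cell based at the first sheet then has boundary image
\[
\alpha*\operatorname{reverse}\bigl(\alpha*\tau_H\alpha*\alpha*\cdots\bigr),
\]
where the reversed word has $q$ alternating factors. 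After cancelling the first spur, this is the inverse of an alternating product of $q-1=m$ factors. That product is the image of the lift of $\gamma^m$; the lift is closed because $m$ is even, and $\gamma^m$ is trivial in $\Pi_1^*(H)$. Hence the loop is null-homotopic and the cell can be filled. The second cell is filled by applying $\tau_H$, which keeps the map equivariant.

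Finally, the proof of Theorem~\ref{intro:positive} applies unchanged: compose with $\eta$, make the error smaller than $1/(2n)$, and choose a vertex of maximum $p$-mass. The main obstacle I anticipate is bookkeeping in the carrier lemma for general $q$: carries now cost longer intermediate words, so the slot budget $M_0$ becomes $O(qK)$. I would simply absorb $q$ into the constants.
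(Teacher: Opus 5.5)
Your proposal is correct and follows the same route as the paper. Like the paper (whose $m=N+1$ is your $q=m+1$), you replace $Y$ by the presentation complex of $\langle a,t\mid tat^{-1}=a^{q}\rangle$, where $q$ is one more than the even order of $[\gamma]$, rerun the carrier and extension lemmas in base $q$, and fill the relation cell with the null-homotopy of the image $(\alpha*\tau_H\alpha)^{(q-1)/2}$ of the lifted $\gamma^{q-1}$. Your side remark that the case of odd $m/2$ reduces directly to Theorem~\ref{intro:positive} is correct but, as you note, unnecessary.
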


\begin{proof}
Let $N$ be the order of $[\gamma]$.  Since length parity takes this
class to $1\in\Z/2\Z$, the integer $N$ is even.  Put $m=N+1$ and replace
the auxiliary complex $Y$ by the presentation complex
\[
 Y_m=Y(\langle a,t\mid tat^{-1}=a^m\rangle).
\]
The assignment $a\mapsto1$, $t\mapsto0$ defines a double cover
$\widehat Y_m\to Y_m$, because $m$ is odd.  Write $\tau$ for its deck
involution.  The integer $m$ is fixed throughout this proof.

We first explain why Proposition~\ref{prop:parity} holds with this
double cover.  In Lemma~\ref{lem:carriers}, use the common height
\[
 K_m=2+\lceil\log_m(R+1)\rceil.
\]
The carrier vertices are again $a^rt^j$, with $r\in I\cap\Z$ and
$0\leq j\leq K_m$.  A horizontal edge at height $j$ now changes $r$ by
$m^j$, and the lower side of each relation cell consists of $m$ edges.
The affine maps $a(x)=x+1$ and $t(x)=mx$ show that the named vertices
are distinct.  Both the carrier inclusions and their translation
covariance are unchanged.  Each strip is still a mapping cylinder,
so the downward retraction has time-track speed $O_m(K_m)$.

For the second stage of the contraction, use base $m$.  If
$n=\sum_{j=0}^k r_jm^j$, where $0\leq r_j<m$, the path
\[
 p_n=t^ka^{r_k}t^{-1}a^{r_{k-1}}\cdots t^{-1}a^{r_0}
\]
stays over $[0,n]$ and has $O_m(K_m)$ edges.  Base-$m$ carrying changes
$p_na$ to $p_{n+1}$ through at most $K_m+1$ relation moves.  All
intermediate paths still have $O_m(K_m)$ edges.  Each relation polygon
has a fixed finite triangulation; hence the fixed-slot homotopies and
the interpolation between integers in Lemma~\ref{lem:carriers} apply
with the same parametrization conventions.  The resulting contractions have
\[
 D_m(R)=O_m(\log(R+2)),\qquad J_m(R)<\infty.
\]
The relative path contraction and anisotropic extension require only
these estimates and the carrier inclusions.  Consequently, for every
$\varepsilon>0$, they give a continuous finite-valued map
$f:X\to\widehat Y_m$ with
\[
 \dist\bigl(f(e_i\cdot x),\tau f(x)\bigr)<\varepsilon
 \qquad(1\leq i\leq d).
\]

We next construct an equivariant map
$\eta:\widehat Y_m\to\calD(H)$.  Use the map from the bipartite double
cover of $H$ described in Lemma~\ref{lem:hom-complex}.  Its extension
over the lifted four-walks sends every null-homotopic loop in the square
complex to a null-homotopic loop in $\calD(H)$.  Let
$\alpha:p\to\tau_Hp$ be the image of the lift of $\gamma$, and put
$\beta=\tau_H\alpha$.  The identity $[\gamma]^N=1$ therefore gives a
null-homotopy of
\[
 (\alpha*\beta)^{N/2}.
\]
Map the two vertices of $\widehat Y_m$ to $p,\tau_Hp$, its two
positive $a$-edges to $\alpha,\beta$, and its two $t$-loops constantly.
The relation cell based at $p$ has boundary image
\[
 \alpha*\operatorname{reverse}
       \bigl((\alpha*\beta)^{N/2}*\alpha\bigr).
\]
Cancelling the initial backtracking path leaves the reverse of
$(\alpha*\beta)^{N/2}$.  Fill this cell using its null-homotopy, and
fill the other cell by applying $\tau_H$.  This defines $\eta$.

Finally choose $\varepsilon$ using the uniform continuity of $\eta$
and apply the probability-coordinate rounding argument from the proof
of Theorem~\ref{intro:positive}.  The resulting vertex map is
continuous and takes adjacent points to adjacent vertices of $H$.
\end{proof}

\begin{remark}\label{rem:converse}
The converse of Theorem~\ref{intro:positive} fails even when its
conclusion is required for every dimension and every free continuous
action.  To see this, define a graph $H$ by the following array:
\[
\begin{array}{rrrrrr}
0&1&2&3&4&0\\
4&5&6&7&8&1\\
3&9&10&11&12&2\\
2&13&14&15&16&3\\
1&17&18&19&20&4\\
0&4&3&2&1&0.
\end{array}
\]
Equal labels denote the same vertex, and two vertices are adjacent
exactly when their labels occur horizontally or vertically next to
each other.  We will prove
\[
 \Pi_1^*(H)\cong\Z/4\Z,
\]
with length parity corresponding to reduction modulo two.

First fill the twenty-five displayed unit squares, obtaining a
complex $K$.  The sixteen interior vertices remain distinct.  Reading
the perimeter clockwise, beginning along the top edge, traverses
\[
 \gamma=(0,1,2,3,4,0)
\]
four times.  Thus $K$ is a disk attached to this five-cycle along
$\gamma^4$, with the displayed grid providing a subdivision.  Its
fundamental group has the presentation
\[
 \Pi_1(K)=\langle z\mid z^4=1\rangle,
 \qquad z=[\gamma].
\]

We must also consider four-walks that are not boundaries of displayed
faces.  Identify the interior vertices with their coordinates
$(i,j)$, $1\leq i,j\leq4$.  Their edges form the ordinary
$4$-by-$4$ grid.  The four interior corners
\[
 U=\{(1,1),(4,1),(1,4),(4,4)\}=\{5,8,17,20\}
\]
each have boundary neighbors $1,4$.  Every other interior perimeter
vertex has one boundary neighbor, and the remaining interior vertices
have none.  Vertex $0$ has no interior neighbor.

Classify a simple four-cycle by the number of its interior vertices.
With none it would lie in the boundary five-cycle, which has no
four-cycle.  With one, that vertex belongs to $U$ and the other three
are $1,0,4$; these are the four corner faces.  With two consecutive
interior vertices, their edge lies on the perimeter of the interior
grid, and their boundary neighbors determine one of the twelve side
faces.  At a corner, only the boundary neighbor on that same side is
adjacent to the boundary neighbor of the other endpoint.  With two
opposite interior vertices, both belong to $U$, giving exactly the
six additional cycles
\[
 (1,u,4,v,1),\qquad \{u,v\}\in\binom{U}{2}.
\]

Three interior vertices are impossible: the two adjacent to the lone
boundary vertex would have to be joined by a two-edge interior path.
For boundary vertices $1,4$, their interior neighbors are the four
corners, at pairwise grid distance at least three.  The interior
neighbors of $2$ and $3$ are respectively
\[
\begin{aligned}
 &\{(2,1),(4,2),(3,4),(1,3)\},\\
 &\{(3,1),(4,3),(2,4),(1,2)\}.
\end{aligned}
\]
Distinct points in either set also have grid distance at least three.
Finally, a four-cycle with four interior vertices is one of the nine
interior unit squares.  This proves that the only additional simple
four-cycles are the six listed above.

For each $u\in U$, its corner square makes the path $(1,u,4)$
homotopic, with endpoints fixed, to $(1,0,4)$.  Each additional cycle
is therefore already null-homotopic in $K$.  A four-walk with a
repeated vertex contains a backtrack and reduces to the constant
walk.  Filling all four-walks consequently adds no further relation,
and proves the asserted computation of $\Pi_1^*(H)$.

The generator $z$ is represented by the odd walk $\gamma$, so the odd
classes are exactly $z,z^3$.  Both have square $z^2\ne1$.  The same
conclusion holds for walks based elsewhere, by transporting them to
the chosen basepoint.  Hence $H$ contains no odd closed walk satisfying
the hypothesis of Theorem~\ref{intro:positive}.  Nevertheless $z$ has
finite order four, so Proposition~\ref{prop:finite-order}, using
$m=5$, gives the conclusion of that theorem in every dimension.  In
particular, $H\in\mathcal H_d$ for every $d\geq1$.
\end{remark}

\subsection{Strict dependence on the dimension}\label{subsec:strict-dimensions}

The classes of targets distinguish every two successive dimensions:
\[
 \mathcal H_1\supsetneq\mathcal H_2\supsetneq
 \mathcal H_3\supsetneq\cdots.
\]
We give a structural proof.  The finite targets will record small windows
of separated marker sets.  Their edge directions have two elementary
properties which prevent a homomorphism from using more lattice
directions than the target records.

For $m\geq1$, put
\[
 S_m=\{\pm e_1,\ldots,\pm e_m\}\subseteq\Z^m.
\]
An \emph{edge-direction assignment} on a graph $H$ is a map $\omega$
from oriented edges to $S_m$ with $\omega(v,u)=-\omega(u,v)$.
We will use the following two properties:
\begin{enumerate}
\item the sum of $\omega$ along every closed four-walk is zero,
including four-walks with repeated vertices;
\item if two walks $(u,v,w)$ and $(u,v',w)$ have all four edge
directions equal to the same $g\in S_m$, then $v=v'$.
\end{enumerate}
The second property says that a straight two-step walk has a unique
middle vertex once its endpoints and direction are fixed.

\begin{lemma}\label{lem:strict-marker}
For every $d\geq1$ and $L\geq4$, there is a finite loopless graph
$M_{d,L}\in\mathcal H_d$ with an edge-direction assignment in $S_d$
satisfying both properties above.
\end{lemma}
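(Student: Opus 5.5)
The plan is to let $M_{d,L}$ be a graph of local windows of separated marker sets, built so that the marker construction of Lemma~\ref{lem:atlas} (or Lemma~\ref{lem:clopen}) gives a continuous homomorphism from $F(2^{\Z^d})$, and the edge directions are simply the lattice steps. Fix a window $W=\{0,\dots,2L-1\}^d$ with periodic boundary conditions modulo $2L$. Take as vertices of $M_{d,L}$ the pairs $(A,w)$ where $w\in(\Z/N\Z)^d$ is a position and $A\subseteq\Z^d$ is a finite pattern, meaning a subset of a fixed box $B=[-3L,3L]^d$ that is $L$-separated in sup norm and maximal on the central subbox. Also allow a finite label from the continuous coloring of Lemma~\ref{lem:clopen}. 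We join $(A,w)$ to $(A',w')$, with direction $g\in S_d$, exactly when $A'$ is the shift $A-g$ restricted appropriately (that is, the two windows agree on their overlap after translating by $g$) and $w'=w+g$ mod $N$. Here $N$ is chosen odd and larger than $4$, or alternatively positions are omitted and only window patterns are used.

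The first step is to show $M_{d,L}\in\mathcal H_d$. Lemma~\ref{lem:clopen}, applied to the displacement set $0<\|g\|_\infty\le L$, gives a clopen maximal $L$-separated marker set. The map sending $x$ to its marker window around $x$, on the box $B$, is continuous and finite-valued, and it takes $e_i\cdot x$ to the shifted window. So it is a homomorphism provided each edge of the Schreier graph lands on an edge of $M_{d,L}$. Looplessness needs a window to differ from its shift in every direction $g$. Separation alone does not guarantee this for patterns without markers near the boundary, so I would add a coordinate: a continuous proper $3$-coloring in each direction $e_i$, again from Lemma~\ref{lem:clopen} applied to the single homeomorphism $e_i$. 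The vertex then records the colors along each axis on the window, and adjacency forces these colors to differ.

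The second step is to define $\omega$ by the direction $g$ and check that it is well defined. An edge must determine its direction uniquely. Distinct $g,g'$ cannot both relate the same pair of windows, because the recorded axis-colors are shift-compatible only for the correct shift; I would ensure this by recording, for each $i$, the $e_i$-color at the base point together with its neighbors. Antisymmetry is automatic.

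The third step verifies the two properties. For property (2), the middle vertex of a straight walk in direction $g$ from $u$ to $w$ is determined by $u$ and $w$: its window is the union of the $g$-shift of $u$ and the $(-g)$-shift of $w$ on the overlap. Since $L\ge4$ and the windows are large, these overlaps cover the window $B$. For property (1), a closed four-walk has directions $g_1,\dots,g_4$ summing to zero. The window shifts compose, so the total translation applied to the pattern is $\sum g_k$, and a nonzero sum would give a window equal to a nontrivial translate of itself on a large region. The separated-marker and proper-coloring data forbid such a period $h$ with $\|h\|\le4$, and this is why $L\ge4$ is assumed.

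I expect this final periodicity exclusion to be the main obstacle, together with arranging the overlap conditions so that edges exist exactly for consistent shifts while the marker map is still a homomorphism. My first attempt would be to make the window record the actual coordinate offset of the base point relative to its nearest marker. An edge in direction $g$ then changes that offset by exactly $g$, unless the nearest marker changes, in which case the shared marker sets still pin down the displacement. Summing around a four-walk then returns the offset, which forces $\sum g_k=0$.
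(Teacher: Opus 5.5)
Once the position coordinate is dropped, your construction is essentially the paper's, with extra color labels. Vertices are $L$-separated marker windows on a box that cover a central region. Edges are unit shifts $g$ under which two windows agree on their overlap, and $\omega$ records $g$. Property (2) follows from $g+B\subseteq B\cup(2g+B)$. Property (1) follows from the fact that a window agrees with its translate by $h=\sum g_k$ on a large region. Two steps, however, fail as written.

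\emph{The position coordinate.} The coordinate $w\in(\Z/N\Z)^d$ with $w'=w+g$ cannot be offered as an option, because no continuous map $x\mapsto w_x$ with $w_{e_i\cdot x}=w_x+e_i$ exists. Such a map would be constant on some cylinder $\{y\in X_d:y|_{[-r,r]^d}=P\}$. Take a free configuration $y$ carrying $P$ both at $0$ and at $ke_1$, with $k>2r$ and $N\nmid k$. Then $w_{ke_1\cdot y}=w_y$ and also $w_{ke_1\cdot y}=w_y+ke_1$, a contradiction. So the homomorphism in your first step is unavailable unless positions are omitted.

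\emph{Uniqueness of the direction.} Your reason for an edge having a unique direction is wrong. If $g\neq g'$ both witness $D\to E$, then $D$ agrees on the overlap with its translate by $g-g'$, which is a vector with even coordinate sum. The coloring $c_i(u)=(u_1+\cdots+u_d)\bmod 2$ is proper in every axis direction and invariant under all such vectors. Hence recorded axis colors, whether at the base point and its neighbors or anywhere else, cannot rule this out.

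\emph{The fix.} The missing ingredient is already supplied by your condition ``maximal on the central subbox'': there is a marker $a\in D$ with $\|a\|_\infty\le L$.
\begin{itemize}
\item Two witnesses $g\neq g'$ put $a-g$ and $a-g'$ in $E$ at sup-distance at most $2<L$, contradicting separation.
\item With $E=D$, the same argument puts $a$ and $a-g$ in $D$ at distance one, which gives looplessness. So the axis colorings are superfluous. Dropping them also spares you checking that their recording windows still satisfy the box inclusion needed for property (2).
\item The same marker completes property (1), which is not the obstacle you anticipate. Following the overlap identities, $a-s_j$ lies in the $j$th window, because $\|a-s_j\|_\infty\le L+4$ stays inside the box. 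At the end, $a$ and $a-h$ both lie in the initial window at distance $\|h\|_\infty\le 4\le L$, and strict separation forces $h=0$.
\end{itemize}
This last step is exactly where $L\ge 4$ enters, and no nearest-marker offset bookkeeping is needed.
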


\begin{proof}
Put
\[
 Q=[-2L,2L]^d\cap\Z^d.
\]
The vertices of $M_{d,L}$ are the subsets $D\subseteq Q$ whose
distinct points have sup-distance greater than $L$ and which cover
$[-L,L]^d\cap\Z^d$ within sup-distance $L$.  Declare $D,E$ adjacent
when, for some $g\in S_d$,
\begin{equation}\label{eq:strict-marker-edge}
 D\cap(g+Q)=(g+E)\cap Q.
\end{equation}
This relation is symmetric.

For a free continuous $\Z^d$-action, use
Lemma~\ref{lem:clopen} to choose a clopen maximal independent set $A$
for the finite-displacement graph
$0<\|g\|_\infty\leq L$.  Each orbit marker set is $L$-separated and
$L$-covering.  Therefore
\[
 D_x=\{u\in Q:u\cdot x\in A\}
\]
is a vertex of $M_{d,L}$: a marker covering a point of
$[-L,L]^d$ necessarily lies in $Q$.  These windows depend continuously
on $x$.  Our action convention gives
$D_{g\cdot x}=D_x-g$ wherever both windows are recorded, so
\eqref{eq:strict-marker-edge} holds for $D_x,D_{g\cdot x}$ with
witness $g$.

Every oriented edge has a unique witness.  Choose $a\in D$ with
$\|a\|_\infty\leq L$, using coverage at zero.  If $g,h$ both witness
an edge $D\to E$, then $a-g,a-h\in E$.  These points lie in $Q$ and
are at distance at most two, so separation forces $g=h$.  Set
$\omega(D,E)=g$.  A loop would similarly put $a,a-g$ in the same
pattern at distance one, so the graph is loopless.

Consider a closed four-walk with directions $g_1,\ldots,g_4$ and
partial sums $s_j=g_1+\cdots+g_j$.  Starting with the chosen $a$ in
its initial pattern, the overlap identities successively record
$a-s_j$ in the $j$th pattern.  All these points lie in $Q$, since
$\|a-s_j\|_\infty\leq L+4\leq2L$.  At the end the initial pattern
contains both $a$ and $a-s_4$.  Their distance is at most $4\leq L$,
so they coincide.  This proves the first property.

Finally, suppose $D\to E\to D'$ has both directions equal to $g$.
In the common coordinate system its windows are
$Q,g+Q,2g+Q$.  Since $g$ is a signed standard generator,
\[
 g+Q\subseteq Q\cup(2g+Q).
\]
The part of $g+E$ in $Q$ is determined by $D$, and its part in
$2g+Q$ is determined by $2g+D'$.  Thus the endpoints determine
the entire middle pattern.  This proves the second property.
\end{proof}

We next recall the two-dimensional finite graphs needed in the
obstruction.  Write
\[
 B_{p,q}=\Gamma_{1,p,q},
 \qquad p,q\geq5,\quad \gcd(p,q)=1.
\]
These are the width-one twelve-tile graphs of GJKS.  The following
description also fixes their edge labels.  Begin with four based
oriented cycles $a,b,c,d$, with
\[
 |a|=|c|=p,\qquad |b|=|d|=q.
\]
Their basepoint is common; other vertices retain their cycle letter
and position. Words concatenate the cycles. Attach twelve grids
with the boundary words in the following table:
\[
\begin{array}{c|c|c|c}
 \text{top}&\text{right}&\text{bottom}&\text{left}\\ \hline
 c&a&c&a\\
 c&b&c&b\\
 d&a&d&a\\
 d&b&d&b\\
 dc&a&cd&a\\
 c&ba&c&ab\\
 cd&a&dc&a\\
 c&ab&c&ba\\
 c^q&a&d^p&a\\
 d^p&a&c^q&a\\
 c&b^p&c&a^q\\
 c&a^q&c&b^p
\end{array}
\]
Horizontal sides are read from left to right and vertical sides
from top to bottom.  Identify the corresponding boundary vertices,
keeping every tile interior private.  Label positive horizontal
edges by $e_1$ and positive vertical edges by $e_2$; reversing an edge
negates its label.  These labels agree under the identifications; write $\lambda$
for the resulting edge-label assignment.
The first four tiles include, in particular, the tori
$C_p\square C_q$ and $C_q\square C_p$, where $\square$ denotes
Cartesian graph product.

Put $X_2=\Free(2^{\Z^2})$.  The GJKS results we need are:
\begin{enumerate}
\item for every such $p,q$, there is a continuous direction-preserving
map $\theta_{p,q}:X_2\to B_{p,q}$
\cite[Theorem~2.5.1]{GJKS};
\item if $J\in\mathcal H_2$, then $B_{p,q}\to J$ for all sufficiently
large $p,q$
\cite[Corollary~3.3.1]{GJKS}.
\end{enumerate}
Here the coordinate labels are chosen so that direction preservation
means
\[
 \lambda\bigl(\theta_{p,q}(x),\theta_{p,q}(e_i\cdot x)\bigr)=e_i
 \qquad(i=1,2).
\]
In the second statement the source width is fixed at one.
Consequently all parameters used below can be chosen as distinct
sufficiently large odd primes.

\begin{lemma}[Rigidity of products of twelve-tile graphs]
\label{lem:strict-product}
Let $H$ be a finite loopless graph with an edge-direction assignment
in $S_m$ satisfying the two properties above.  Let
\[
 B=B_{p_1,q_1}\square\cdots\square B_{p_k,q_k},
\]
where all $2k$ parameters are distinct odd primes.  Give the source
edges their standard labels $\lambda\in S_{2k}$.
For every homomorphism $f:B\to H$, there is an $m\times2k$ matrix $M$
whose columns are signed unit vectors using pairwise distinct target
axes, such that
\begin{equation}\label{eq:strict-rigidity}
 \omega\bigl(f(b),f(b')\bigr)=M\lambda(b,b')
\end{equation}
on every edge.  In particular, $2k\leq m$.
\end{lemma}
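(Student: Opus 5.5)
The plan is to pull the direction assignment back along $f$ and show that it is forced to be a "linear" labelling of $B$. Set $\mu(b,b')=\omega(f(b),f(b'))$ for every oriented edge of $B$. Since $f$ is a homomorphism and $H$ is loopless, $\mu$ is an antisymmetric $S_m$-valued assignment on $B$. It inherits both properties. Every closed four-walk of $B$, in particular every unit square of every grid tile and every square coming from the Cartesian product, maps to a closed four-walk of $H$, so the four values of $\mu$ around it sum to zero. Two straight two-step walks in $B$ with equal $\mu$-directions and equal endpoints have images with a common middle vertex.

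Antisymmetry and property (1) also show that the sum of $\mu$ along closed walks is invariant under spur and four-walk moves. It therefore defines a homomorphism $\Sigma:\Pi_1^*(B)\to\Z^m$, and this is how the global twelve-tile relations will enter.

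\emph{Step 1: analysis of a single square.} Take a square with boundary directions $g_1,g_2,-g_3,-g_4$, all in $S_m$, summing to zero. Then either the opposite sides carry equal labels ($g_1=g_3$, $g_2=g_4$), or the square is degenerate, with two consecutive labels cancelling ($g_2=-g_1$, $g_4=-g_3$).

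\emph{Step 2: translation invariance inside a factor.} I want to show that inside each grid tile the pulled-back labels are constant along rows and columns. This would give a single vector $u\in S_m$ for all horizontal edges and a single $v\in S_m$ for all vertical edges of that factor, and hence a matrix $M$ satisfying \eqref{eq:strict-rigidity} factor by factor.

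This is where I expect the main difficulty, namely ruling out degenerate squares and the situation $u=\pm v$. My first attempt uses $\Sigma$ together with the primes. The boundary cycles $a,b,c,d$ of lengths $p,q$ are shared between tiles. The tiles with boundary words such as $c^q$ versus $d^p$, and $b^p$ versus $a^q$, give relations in $\Pi_1^*(B_{p,q})$ like $c^q\sim d^p$ and $a^q\sim b^p$, up to conjugation.

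Applying $\Sigma$, suppose all horizontal edges carried a common vector $u$ and all vertical edges a common vector $v$ with $v=\pm u$. Then the torus tiles $C_p\square C_q$ and the tiles $C_p\square C_p$-type would yield $pu=\pm qu$ or similar. This is impossible because $p\ne q$ are odd primes.

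If labels can vary along a row, degenerate squares propagate a "fold". I would use property (2) on the straight two-step walks inside a grid to show that a fold forces $f$ to identify vertices along diagonals. Following such a diagonal around the tori $C_p\square C_q$ then produces a closed walk whose $\Sigma$-value is a nonzero multiple of a lattice vector that must vanish, using $\gcd(p,q)=1$. I expect this to be the delicate bookkeeping.

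\emph{Step 3: assembling the factors.} Once each factor $B_{p_j,q_j}$ has its pair of columns $(u_j,v_j)$ on distinct axes, I handle different factors through the product squares of $B$. An edge in factor $j$ and an edge in factor $l$ span a commuting square. By Step 1, either the labels are invariant across it, so the columns do not depend on the position in the other factor, or the square is degenerate, which forces a column of factor $j$ to be the negative of a column of factor $l$.

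Two columns on the same axis would again let $\Sigma$ compare cycle lengths. It would equate $\pm p_j$ or $\pm q_j$ times a unit vector with a multiple coming from a distinct prime $p_l$ or $q_l$, which gives the contradiction. Hence all $2k$ columns use pairwise distinct target axes, \eqref{eq:strict-rigidity} holds on every edge, and $2k\le m$ follows.
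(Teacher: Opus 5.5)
\textbf{There is a genuine gap.} You have essentially swapped the roles of the two properties.

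\emph{Property (1) cannot exclude collapse.} You try to exclude $v=\pm u$, and later collapse between factors, by means of $\Sigma$. This cannot work. Every relation you invoke is a sum of square relations, and the collapsed assignment satisfies all of them. Put $u$ on every horizontal edge and $\pm u$ on every vertical edge. Then the long tiles give $\Sigma(c^q)=qpu=\Sigma(d^p)$ and $\Sigma(a^q)=\pm qpu=\Sigma(b^p)$. The torus tiles and the product squares give only commutators. So no identity of the form $pu=\pm qu$ ever appears, nor any comparison of $p_j$ with $p_l$.

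\emph{Collapse is excluded by property (2) and looplessness.} First establish constancy. Then suppose $v=\eps u$. In $C_p\square C_q$ the two routes $(i,j)\to(i+1,j)\to(i+1,j+\eps)$ and $(i,j)\to(i,j+\eps)\to(i+1,j+\eps)$ both read $u,u$. The unique-middle-vertex property gives $f(i+1,j)=f(i,j+\eps)$, so $f$ is invariant under $(1,-\eps)$. Since $\gcd(p,q)=1$, this element generates $\Z/p\Z\times\Z/q\Z$. Hence $f$ is constant on a subgraph with edges, which is impossible for a loopless target. Collapse between different factors is excluded the same way, on the product of two boundary cycles of coprime lengths such as $p_i$ and $p_l$.

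\emph{Property (1) is what gives constancy, and your Step 2 does not supply it.} Your fold/diagonal sketch does not prove that labels are constant along rows and columns. You already have the right relations. Summing over the long tiles gives $qw_c=pw_d$ and $qw_a=pw_b$ for the total directions of the boundary cycles. Coprimality then gives $w_c=pu$, $w_d=qu$, $w_a=pv$, $w_b=qv$ with $\|u\|_1,\|v\|_1\le1$. Since $p$ is odd, the parity of the coordinate sum forces $u,v\in S_m$.

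The missing observation is norm saturation: $p$ signed unit vectors can sum to $pu$ only if every one of them equals $u$. Hence every positive edge of $c,d$ carries $u$, and every positive edge of $a,b$ carries $v$. Now sum property (1) over the strip between any row of a tile and its top boundary. A row of width $W$ then totals $Wu$, so each of its edges is $u$; columns are analogous. No analysis of degenerate squares is needed.

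\emph{Step 3 has the same problem.} Your single-square dichotomy does not make the columns independent of the other coordinates. Instead, sum property (1) over the squares joining a boundary cycle of length $p_i$ to its copy in an adjacent slice. The transverse labels telescope, leaving $p_i(u'-u)=0$, so $u'=u$.
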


\begin{proof}
First consider a homomorphism from one factor $B_{p,q}$.
Let $w_a,w_b,w_c,w_d\in\Z^m$ be the total directions on its four
boundary cycles.  Summing the first property over the unit squares
of the long horizontal and vertical tiles gives
\[
 q w_c=p w_d,\qquad q w_a=p w_b.
\]
Coprimality gives integer vectors $u,v$ with
\[
 w_c=pu,\quad w_d=qu,\qquad w_a=pv,\quad w_b=qv.
\]
Each edge direction has $\ell^1$-norm one, so
$\|u\|_1,\|v\|_1\leq1$.  The parity of the coordinate sum of the
total direction equals the parity of the walk length.  Since $p$
is odd, both $u,v$ have odd coordinate sum.  Hence $u,v\in S_m$.

A sum of $p$ unit coordinate vectors can equal $pu$ only if every
summand equals $u$.  Thus every positive $c,d$ boundary edge has
direction $u$, and every positive $a,b$ boundary edge has direction
$v$.  For a row of any tile, sum the first property over the strip
between that row and the top boundary.  The two vertical boundary
paths have equal total directions, because every one of their edges
has direction $v$.  A row of width $W$ therefore has total direction
$Wu$.  The same argument about equality of norms makes every edge
in that row have direction $u$.  The analogous column argument gives
direction $v$ on all positive vertical edges.  Thus the directions
are constant throughout the factor.

They cannot use the same unsigned axis.  Otherwise
$v=\varepsilon u$ for $\varepsilon\in\{1,-1\}$.  Restrict to
$C_p\square C_q$.  The two routes from $(i,j)$ to
$(i+1,j+\varepsilon)$ both have successive directions $u,u$.
The unique-middle-vertex property gives
\[
 f(i+1,j)=f(i,j+\varepsilon).
\]
Thus $f$ is invariant under $(1,-\varepsilon)$.  This translation
is transitive on $\Z/p\Z\times\Z/q\Z$, since $p,q$ are coprime.
The restriction would be constant, contrary to looplessness.

Now consider a product.  Fixing the other coordinates applies the
preceding argument to each factor slice.  The resulting two vectors
are independent of the fixed coordinates.  Indeed, for adjacent
slices $f_0,f_1:B_{p_i,q_i}\to H$, put
$\delta(b)=\omega(f_0(b),f_1(b))$.  The first property on the square
over an edge $bb'$ gives
\begin{equation}\label{eq:strict-ladder}
 \omega(f_1(b),f_1(b'))-\omega(f_0(b),f_0(b'))
 =\delta(b')-\delta(b).
\end{equation}
Sum this identity around the horizontal length-$p_i$ boundary cycle.
Its left side is $p_i$ times the difference of the two horizontal
vectors, so those vectors agree.  The vertical boundary cycle gives
the same conclusion for the vertical vectors.  Since the other
factors are connected, the vectors are independent of all their
coordinates.

It remains to exclude collapse between axes from different factors.
Restrict to the product of a boundary cycle of length $p_i$ in one
factor and one of length $p_j$ in another.  These lengths are coprime,
so the same torus argument applies.  All $2k$ vectors therefore use
distinct unsigned axes, proving the lemma.
\end{proof}

To apply these finite obstructions in higher dimensions, we need a
compactness argument.  It accounts for the passage from continuous
colorings to Cartesian products of the two-dimensional source graphs.

\begin{lemma}\label{lem:strict-compact}
For every finite loopless graph $H$ and $k\geq1$:
\begin{enumerate}
\item if $H\in\mathcal H_{2k}$, then there is a homomorphism
$B_{p_1,q_1}\square\cdots\square B_{p_k,q_k}\to H$;
\item if $H\in\mathcal H_{2k+1}$, then there is a homomorphism
$C_r\square B_{p_1,q_1}\square\cdots\square B_{p_k,q_k}\to H$
for some odd $r\geq3$.
\end{enumerate}
The parameters $p_1,q_1,\ldots,p_k,q_k$, and also $r$ in the second
statement, can all be chosen as distinct sufficiently large odd primes.
\end{lemma}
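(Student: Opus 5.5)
We reduce to the two GJKS facts recalled above by building a continuous, direction-preserving homomorphism from the standard Schreier graph on $X_{2k}$ (or $X_{2k+1}$) into the product graph. Write $\Z^{2k}=\Z^2\times\cdots\times\Z^2$.

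\emph{First factor and first choice of parameters.} Suppose $H\in\mathcal H_{2k}$ and fix a continuous homomorphism $c:X_{2k}\to H$. For the first $\Z^2$-factor, restrict the shift to the subgroup $\Z^2\times0$. The space $X_{2k}$ is zero-dimensional Polish and this subaction is free, so GJKS Theorem~2.5.1 applies to it. That theorem is stated for $X_2$; to pass to $X_{2k}$ we invoke continuous universality \cite[Theorem~1.12]{Bernshteyn}, applied to the graph $B_{p,q}$ regarded as a target in $\mathcal H_2$ together with its edge labels. The preferable route is to rerun the GJKS construction directly: it uses only clopen markers on a free zero-dimensional action. This gives continuous direction-preserving maps $\theta^{(i)}:X_{2k}\to B_{p_i,q_i}$ for the $i$th $\Z^2$-coordinate subaction.

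\emph{Compactness.} The core of the argument is the following step. Put $\Theta=(\theta^{(1)},\dots,\theta^{(k)}):X_{2k}\to B$. By direction preservation, $\Theta$ is a homomorphism of the standard Schreier graph onto a subgraph of $B$ that respects labels: $e_{2i-1},e_{2i}$ move only the $i$th factor. We cannot simply factor $c$ through $\Theta$, so we pass to a product of GJKS graphs instead. Consider the continuous map $(\Theta,c):X_{2k}\to B\times H$ and let $J$ be the finite graph on its image, with edges induced by the generators. Then $J$ projects to $H$ by a homomorphism. We want $J$ to contain a copy of some product $B'=B_{p'_1,q'_1}\square\cdots$ with large parameters.

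To get this, apply the second GJKS fact one coordinate block at a time. Fix the other $2k-2$ coordinates and look along $\Z^2$-slices. The slicewise structure shows that a suitable finite graph $J_1$ lies in $\mathcal H_2$, so $B_{p_1,q_1}\to J_1$ for large $p_1,q_1$. Iterating this over the blocks, we use the finite-pattern compactness argument of GJKS Section~3 applied to the whole $\Z^{2k}$-grid: one takes a toroidal periodic configuration built from tiles. This yields a homomorphism from $\square_i B_{p_i,q_i}$ to $H$.

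\emph{Odd dimensions and the main obstacle.} For case (2), the extra coordinate is handled by a one-dimensional argument. A continuous homomorphism on the $\Z$-direction, restricted to a clopen marker structure of period roughly $r$, gives an odd cycle $C_r$ for every large prime $r$. This is the standard GJKS argument for $\mathcal H_1$: odd lengths are unavoidable because the graph has no continuous $2$-coloring. Choosing all primes distinct and large is free, since each step allows all sufficiently large values.

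I expect the main obstacle to be making the multi-block compactness rigorous. The GJKS Corollary~3.3.1 proof uses the twelve-tile theorem for $\Z^2$, and we need a product version: a $2k$-dimensional "tile" argument in which orthogonal factors can be handled independently. What I would try first is to prove the product twelve-tiles theorem by building a free element of $2^{\Z^{2k}}$ as a product-like configuration. Its $\Z^2$-slices in each block would be GJKS "hyper-aperiodic" points with compatible periodic tile structure. Continuity of $c$ on the orbit closure then forces homomorphisms from each tile product, and these glue into a homomorphism of the product of the $B_{p_i,q_i}$.
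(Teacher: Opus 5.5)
There is a genuine gap: your proposal never produces a homomorphism \emph{out of} the product graph, and the step that should do so is deferred to an unproved ``product twelve-tiles theorem''. The maps $\theta^{(i)}:X_{2k}\to B_{p_i,q_i}$ point the wrong way; the paper uses such maps later, in the odd-dimensional obstruction, not here. Knowing $X_{2k}\to B$ and $X_{2k}\to H$ gives no map $B\to H$. Asking the image graph $J$ of $(\Theta,c)$ to receive a product $B'$ is just the lemma itself applied to $J$, which is circular.

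The slice argument does not rescue this. $X_{2k}=\Free(2^{\Z^{2k}})$ is neither compact nor a product of $\Z^2$-spaces, so ``fixing the other $2k-2$ coordinates'' does not define a single finite graph $J_1$. Homomorphisms $B_{p,q}\to(\text{slice})$ obtained slice by slice vary with the slice and need not assemble into a homomorphism from a Cartesian product. Likewise, in case~(2), a marker structure of ``period roughly $r$'' on a free action gives no exact cycle, let alone a $C_r$ factor that is simultaneous with the $B$ factors.

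The missing idea is to pass to a compact free model that is literally a product, and let compactness do the factorization.

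\begin{enumerate}
\item Take $P=\prod_{g\neq0}P_g$, where $P_g\subseteq\{0,1,2\}^{\Z^2}$ consists of the colorings that are proper for displacement $g$. This is a compact zero-dimensional free $\Z^2$-space. Every free zero-dimensional $\Z^2$-space maps to it continuously and equivariantly, using continuous proper three-colorings of the displacement graphs from Lemma~\ref{lem:clopen}.
\item Bernshteyn's continuous universality transfers your coloring to a continuous $H$-coloring of $P^k$ with the product $\Z^{2k}$-action.
\item Compactness gives finite clopen partitions $\alpha_i$ of the factors such that the color depends only on $(\alpha_1(y_1),\dots,\alpha_k(y_k))$.
\item Let $J_i$ be the graph on the atoms of $\alpha_i$, with edges witnessed by Schreier edges of $P$. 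It is loopless because $H$ is, and it lies in $\mathcal H_2$ via $X_2\to P\to J_i$. The induced color map is a homomorphism $J_1\square\cdots\square J_k\to H$, precisely because in a product the other coordinates can be chosen freely.
\item Now apply the two-dimensional GJKS Corollary~3.3.1 to each $J_i$ separately. No higher-dimensional tile theorem is needed.
\end{enumerate}

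For case~(2), add a factor given by the $3$-adic odometer, a compact free $\Z$-space. Compactness makes the dependence on this factor go through some $\Z/3^a\Z$, which yields $C_{3^a}\square J_1\square\cdots\square J_k\to H$. Finally, $C_r\to C_{3^a}$ for every odd $r\geq3^a$ by inserting backtracks, so $r$ can be taken to be a large prime distinct from the other parameters.
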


\begin{proof}
For each nonzero $g\in\Z^2$, let
\[
 P_g=\bigl\{z\in\{0,1,2\}^{\Z^2}:
 z(h)\ne z(h+g)\text{ for every }h\in\Z^2\bigr\},
 \qquad P=\prod_{g\ne0}P_g.
\]
Each $P_g$ is nonempty (color every $g$-orbit alternately with
two colors) and compact.  The product $P$ is a compact
zero-dimensional Polish space with a free $\Z^2$-action: a nonzero
$g$ cannot fix its $g$-th layer.  Every free zero-dimensional Polish
$\Z^2$-space continuously and equivariantly maps to $P$.  To see this,
choose, by Lemma~\ref{lem:clopen}, a continuous proper three-coloring
$c_g$ of each $g$-displacement graph and use the $g$-th layer
$z_g(h)=c_g(h\cdot x)$.

Continuous universality
\cite[Theorem~1.12]{Bernshteyn} transfers an $H$-coloring of the free
shift to every free zero-dimensional Polish action of the same group.
Thus $H\in\mathcal H_{2k}$ gives a continuous $H$-coloring of the
compact product $P^k$ with its product $\Z^{2k}$-action.
There are finite clopen partitions $\alpha_i$ of its factors such
that the color depends only on the tuple of partition atoms.
Indeed, cover the compact product by finitely many clopen product
rectangles on which the coloring is constant, and take the partitions
generated by their coordinate factors.

Let $J_i$ be the finite graph on the nonempty atoms of $\alpha_i$,
joining two atoms when an actual standard Schreier edge of $P$
has its endpoints in those atoms.  No atom contains such an edge:
fixing all other factor coordinates would otherwise give adjacent
product points with the same color in the loopless target $H$.
Therefore $J_i$ is loopless and $\alpha_i:P\to J_i$ is a continuous
homomorphism.  Composing with $X_2\to P$ gives
$J_i\in\mathcal H_2$.

The coloring of atom tuples defines a homomorphism
\[
 J_1\square\cdots\square J_k\longrightarrow H.
\]
For an edge in one factor, choose its witnessing edge in $P$ and
arbitrary points in the other atoms.  This verifies adjacency of
their colors.  The GJKS finite-source result now allows us to choose
$B_{p_i,q_i}\to J_i$, with all parameters distinct sufficiently
large odd primes.  Composition proves the first assertion.

For the second, use the odd odometer
\[
 O=\varprojlim_{a\geq1}\Z/3^a\Z
\]
Its points are compatible sequences of residues, and its basic
clopen sets fix a residue at one level. Translation by one
makes it a compact free $\Z$-space: an integer fixing a point
would be divisible by every power of three, and hence zero.
Universality gives an $H$-coloring of $O\times P^k$.
Compactness makes its dependence on $O$ factor through one finite
quotient $\Z/3^a\Z$, and its dependence on the other factors through
finite clopen partitions as above.  We may take $a\geq1$.
The same construction gives
\[
 C_{3^a}\square J_1\square\cdots\square J_k\longrightarrow H.
\]
Replace the $J_i$ by the corresponding twelve-tile graphs.  To make
the remaining parameter prime as well, put $s=3^a$.  For every odd
$r\geq s$, traversing $C_s$ once and inserting $(r-s)/2$ backtracking
pairs gives a closed walk of length $r$, hence a homomorphism
$C_r\to C_s$.  Choose $r$ to be an arbitrarily large odd prime
distinct from all $p_i,q_i$, and precompose with this map in the
cycle factor.  This proves the second assertion with all its
parameters prime.
\end{proof}

\begin{lemma}\label{lem:strict-odd}
Suppose $m=2k$ and $H$ satisfies the edge-direction hypotheses of
Lemma~\ref{lem:strict-product}.  For a product $B$ as in that lemma
and any odd $r\geq3$,
\[
 C_r\square B\nrightarrow H.
\]
\end{lemma}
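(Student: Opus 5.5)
Suppose for contradiction that $g:C_r\square B\to H$ is a homomorphism, where $m=2k$. The plan is to first handle the $B$-slices with Lemma~\ref{lem:strict-product}, and then show that the cycle direction has nowhere to go.

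First, for each vertex $i$ of $C_r$, restrict $g$ to the slice $\{i\}\times B$. Lemma~\ref{lem:strict-product} gives an $m\times 2k$ matrix $M_i$ whose columns are signed unit vectors on pairwise distinct axes. Since $m=2k$, these columns use every target axis exactly once. Next, the ladder identity \eqref{eq:strict-ladder} applied to adjacent slices shows that $M_i=M_{i+1}$; the argument is the same as in the proof of Lemma~\ref{lem:strict-product}, summing around boundary cycles of odd prime length. So there is a single matrix $M$.

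Now look at the cycle edges. Put $\delta_i(b)=\omega(g(i,b),g(i+1,b))\in S_m$. Identity \eqref{eq:strict-ladder}, with equal slice directions, gives $\delta_i(b')=\delta_i(b)$ for every edge $bb'$ of $B$. Since $B$ is connected, $\delta_i$ is a constant $\delta_i\in S_m$. Write $\delta_i=\varepsilon e_j$. Because $M$ is a signed permutation, some column of $M$ equals $\varepsilon' e_j$. Let this be the direction of a factor edge $\lambda=e_s$ in some factor $B_{p,q}$, and restrict attention to a boundary cycle of that factor of odd prime length $p$.

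Consider the square with vertices $(i,b)$, $(i+1,b)$, $(i,b+e_s)$, $(i+1,b+e_s)$. If $\varepsilon=\varepsilon'$, both two-step routes from $(i,b)$ to $(i+1,b+e_s)$ have the single direction $\varepsilon e_j$ twice. The unique-middle-vertex property then gives $g(i+1,b)=g(i,b+e_s)$. If $\varepsilon=-\varepsilon'$, I apply the same property to the routes from $(i+1,b)$ to $(i,b+e_s)$, and get $g(i,b)=g(i+1,b+e_s)$.

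The step I expect to be the main obstacle is that $\delta_i$ might vary with $i$, so the identifications depend on $i$. To handle this, I would argue as in the torus part of Lemma~\ref{lem:strict-product}, but on the product $C_r\square C_p$ of the cycle with that boundary cycle. First, the total cycle direction $\sum_i\delta_i$ is independent of $b$. Summing the first property over the strips shows that this total is a sum of $r$ unit vectors. Its parity is odd because $r$ is odd, so it cannot be zero. Hence some axis $j$ carries nonzero net $\delta$-weight. That axis equals $M$ applied to a unique source direction $e_s$, and I choose that factor cycle.

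Along the $C_r$ direction, each step whose $\delta_i$ lies on axis $j$ identifies $g(i+1,\cdot)$ with $g(i,\cdot)$ shifted by $\pm1$ in the $C_p$ coordinate. Each step on another axis $j'$ instead gives a partial identification through the corresponding other factor. Going once around $C_r$ should yield that $g$ restricted to the slice $\{0\}\times C_p$ is invariant under a shift by the net signed count, which is odd and hence nonzero mod $p$ once $p>r$. That is a translation transitive on $C_p$, so the slice is constant, contradicting looplessness.

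The gap I would have to close is making this composition rigorous when the steps use different axes. A cleaner fallback is to observe that the parity and norm argument of Lemma~\ref{lem:strict-product}, run on the odd $r$-cycle tiles $C_r\square C_{p}$, forces all $\delta_i$ equal to one vector $u$. Then $u$ is a new column that must avoid all $2k$ axes already used, which is impossible in $S_{2k}$.
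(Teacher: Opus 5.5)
Your first steps agree with the paper. Since $m=2k$, the slices share one square signed permutation matrix $M$, and each $\delta_i$ is independent of $b$. The unique-middle-vertex property then gives $g(i+1,b)=g(i,b')$ for every edge $b\to b'$ of $B$ labelled $s_i=M^{-1}\delta_i$.

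The proof breaks where you close up around $C_r$. Steps $s_i$ on axes other than the chosen one move the point in other coordinates. On the slice $C_r\square C_p$ the identifications therefore relate the restrictions of $g$ to different copies of $C_p$, and they never give an invariance of a single function; you flag this yourself. You also assume $p>r$, which the lemma does not provide, since it allows every odd $r$ and every admissible $B$.

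The fallback is not valid either. The norm argument of Lemma~\ref{lem:strict-product} needs the relation $qw_c=pw_d$, which comes from the long tiles joining two cycles of coprime lengths. The factor $C_r$ has no such partner. On $C_r\square C_p$ the four-walk property only shows that every $C_r$-fibre has the same total $\sum_i\delta_i$; nothing forces the $\delta_i$ to coincide.

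The missing idea is the one the paper uses to make the composition meaningful. The finite graph $B$ has no global translations, because vertices on the shared boundary cycles have several neighbours with the same label. The paper therefore pulls the slices back along the product $\theta:X_2^k\to B$ of the direction-preserving maps $\theta_i$. For $c_j=f_j\theta$, your identification becomes $c_{j+1}(y)=c_j(s_j\cdot y)$, a genuine translation. Iterating around $C_r$ gives $c_0(t\cdot y)=c_0(y)$ with $t=\sum_j s_j$, which is nonzero in $\Z^{2k}$ because its coordinate sum is odd. Translation by $t$ is topologically transitive on the factors where $t$ is nonzero. Hence the continuous finite-valued $c_0$ is constant on those slices, which contain Schreier edges, contradicting looplessness. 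No comparison between $r$ and the primes is needed.

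A purely finite version of your plan works only on the product $\prod_i(C_{p_i}\square C_{q_i})$ of torus tiles, viewed as a Cayley graph of a finite abelian group. It also needs the image of $t$ there to be nonzero, for instance when all primes exceed $r$; the Chinese remainder theorem then puts a standard generator in the subgroup generated by $t$. That proves a weaker lemma than the one stated. It would still suffice for Proposition~\ref{prop:strict-dimensions} if $r$ is chosen before the $p_i,q_i$ in Lemma~\ref{lem:strict-compact}.
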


\begin{proof}
Suppose such a homomorphism exists, with successive $C_r$-slices
$f_0,\ldots,f_r=f_0:B\to H$.  By
Lemma~\ref{lem:strict-product}, every slice has a signed permutation
matrix $M_j$.  Apply \eqref{eq:strict-ladder} to adjacent slices and
sum around each source boundary cycle.  This gives
$M_{j+1}=M_j$; call their common matrix $M$.
The same identity now makes
\[
 \delta_j(b)=\omega(f_j(b),f_{j+1}(b))
\]
independent of $b$, because $B$ is connected.  Write its value as
$\delta_j\in S_{2k}$.

For each factor choose the direction-preserving map
$\theta_i:X_2\to B_{p_i,q_i}$.  Their product is a continuous
direction-preserving homomorphism
\[
 \theta:X_2^k\longrightarrow B
\]
for the product $\Z^{2k}$-action.  Put $c_j=f_j\theta$ and
$s_j=M^{-1}\delta_j\in S_{2k}$.  The two routes
\[
 c_j(y),\ c_j(s_j\cdot y),\ c_{j+1}(s_j\cdot y)
 \quad\text{and}\quad
 c_j(y),\ c_{j+1}(y),\ c_{j+1}(s_j\cdot y)
\]
both have successive directions $\delta_j,\delta_j$.
Their middle vertices therefore agree:
\[
 c_{j+1}(y)=c_j(s_j\cdot y).
\]
Iteration gives
\[
 c_0(y)=c_0(t\cdot y),\qquad
 t=\sum_{j=0}^{r-1}s_j.
\]
The coordinate sum of $t$ is odd, so $t\ne0$.

Split $t$ into its $k$ two-dimensional components and hold fixed the
factors on which that component is zero.  On the product of the
remaining factors, translation by $t$ is topologically transitive.
In fact, sufficiently large integer multiples of each nonzero
component separate any two prescribed finite cylinder supports.
One common sufficiently large multiple works in all these factors,
and the combined finite conditions extend to free configurations.

A continuous finite-valued function invariant under a topologically
transitive homeomorphism is constant: two different nonempty color
fibers would be disjoint invariant open sets.  Consequently $c_0$
is constant on each active-factor slice with the inactive factors
fixed.  There is at least one active factor, and its standard
Schreier edges lie in that slice.  This contradicts the homomorphism
property into a loopless graph.
\end{proof}

\begin{proposition}\label{prop:strict-dimensions}
For every $d\geq1$ and $L\geq4$,
\[
 M_{d,L}\in\mathcal H_d\setminus\mathcal H_{d+1}.
\]
Hence the sequence $(\mathcal H_d)_{d\geq1}$ is strictly decreasing.
\end{proposition}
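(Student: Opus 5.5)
Membership $M_{d,L}\in\mathcal H_d$ is part of Lemma~\ref{lem:strict-marker}, so the plan is to prove $M_{d,L}\notin\mathcal H_{d+1}$ by contradiction, splitting on the parity of $d+1$. Throughout, $H=M_{d,L}$ is finite and loopless and carries an edge-direction assignment in $S_d$ with both properties, so Lemma~\ref{lem:strict-product} applies with $m=d$.

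\emph{Case $d+1=2k$ even.} Suppose $M_{d,L}\in\mathcal H_{2k}$. Lemma~\ref{lem:strict-compact}(1) gives a homomorphism
\[
 B_{p_1,q_1}\square\cdots\square B_{p_k,q_k}\to M_{d,L},
\]
with all $2k$ parameters distinct odd primes. Lemma~\ref{lem:strict-product} then yields $2k\le m=d=2k-1$, a contradiction.

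\emph{Case $d+1=2k+1$ odd.} Here $d=2k$. If $k=0$, that is $d=0$, the case does not arise since $d\ge1$. So $k\ge1$. Suppose $M_{d,L}\in\mathcal H_{2k+1}$. Lemma~\ref{lem:strict-compact}(2) gives
\[
 C_r\square B_{p_1,q_1}\square\cdots\square B_{p_k,q_k}\to M_{d,L}
\]
for some odd $r\ge3$, with all parameters distinct odd primes. Lemma~\ref{lem:strict-odd}, applied with $m=2k$, forbids exactly such a homomorphism, a contradiction.

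The remaining edge case is $d=1$ in the even branch. There $d+1=2$, $k=1$, and the product lemma gives $2\le1$, which is the same contradiction, so no separate treatment is needed. Combined with the inclusions $\mathcal H_{d+1}\subseteq\mathcal H_d$ noted in the introduction, this shows the sequence is strictly decreasing.

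The only real obstacle is checking that the hypotheses match. In Lemma~\ref{lem:strict-compact} we need $H$ loopless, which Lemma~\ref{lem:strict-marker} supplies. Both lemmas need prime parameters that are distinct and large, which is guaranteed by the freedom to choose them. Everything else is bookkeeping of the parity split.
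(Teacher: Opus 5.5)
Your proposal is correct and matches the paper's proof. Membership comes from Lemma~\ref{lem:strict-marker}; for $d=2k-1$, the first part of Lemma~\ref{lem:strict-compact} together with Lemma~\ref{lem:strict-product} forces $2k$ distinct axes in $\Z^{2k-1}$; and for $d=2k$, the second part of Lemma~\ref{lem:strict-compact} contradicts Lemma~\ref{lem:strict-odd}. Your explicit appeal to $\mathcal H_{d+1}\subseteq\mathcal H_d$ for the strictness conclusion is a small point the paper leaves implicit.
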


\begin{proof}
Membership in $\mathcal H_d$ follows from
Lemma~\ref{lem:strict-marker}.  If $d=2k-1$ and
$M_{d,L}\in\mathcal H_{d+1}$, the first part of
Lemma~\ref{lem:strict-compact} and
Lemma~\ref{lem:strict-product} would require $2k$ distinct target
axes in $\Z^{2k-1}$, a contradiction.
If $d=2k$ and $M_{d,L}\in\mathcal H_{d+1}$, the second part of
Lemma~\ref{lem:strict-compact} would contradict
Lemma~\ref{lem:strict-odd}.
\end{proof}

\noindent\begin{minipage}{\textwidth}
\small
\textsc{School of Mathematical Sciences and School of Pre-university,}\\
\textsc{Dalian Minzu University}\\[0.4em]
\textit{Email address:}\enspace
\href{mailto:wangruijun@dlnu.edu.cn}{\texttt{wangruijun@dlnu.edu.cn}}
\end{minipage}
\end{document}